\documentclass[11pt]{amsart}
\usepackage{amsmath,amssymb,amsfonts,amsthm}  
\usepackage{graphics}  
\usepackage[all,cmtip]{xy}
\usepackage{hyperref}

\usepackage{xcolor}
\definecolor{darkgreen}{RGB}{0, 100, 0}

\usepackage{tikz}
\usetikzlibrary{arrows.meta, positioning}

\newcommand{\co}{\colon\,}

\usepackage{color}

\newtheorem{thm}{Theorem}[section]
\newtheorem*{thm*}{Theorem}

\newtheorem{cor}[thm]{Corollary}
\newtheorem{lemma}[thm]{Lemma}
\newtheorem{rem}[thm]{Remark}
\newtheorem{prop}[thm]{Proposition}

\theoremstyle{definition}

\newcommand{\ds}{\displaystyle}

\usetikzlibrary{cd}
\usetikzlibrary{arrows,automata,shapes,patterns,calc}
\usepackage{tkz-graph}
\usetikzlibrary{decorations.markings}
\usetikzlibrary{decorations.pathreplacing}
\usetikzlibrary{arrows.meta}
\usetikzlibrary{bending}

\begin{document}
﻿\title{A Classification of Bicritical Dynamic Portraits}
\author[Saenz]{Edgar Saenz}
\address{Department of Mathematics\\ Virginia Tech\\
Blacksburg, VA 24061\\ U.S.A.}
\email{easaenzm@math.vt.edu}
﻿
\author[Samji]{Dheemanth Samji}
\address{Department of Mathematics\\ Virginia Tech\\
Blacksburg, VA 24061\\ U.S.A.}
\email{dheemanth@vt.edu}
﻿
\date\today
\keywords{Thurston map, bicritical dynamic portrait, Levy cycle}
\subjclass[2010]{37F20, 57M12}
\maketitle
﻿
\begin{abstract} 
The dynamic of a rational 
map $f:\widehat{\mathbb C}\to \widehat{\mathbb C}$ is determined by the forward orbits of its critical 
points. Such a map is called {\em postcritically finite} 
if every critical point has finite forward orbit, or equivalently, 
if every critical point eventually maps into a periodic cycle. 
These orbits can be encoded in a finite 
directed graph called a {\em dynamic portrait}. In this work, 
we classify which abstract bicritical dynamic portraits of degree $d\geq2$ with at least 4 postcritical points are realizable exclusively by rational Thurston maps.
\end{abstract}

\section{Introduction}

\noindent 
Let $\widehat{\mathbb C}$ denote the Riemann sphere, and let 
$f:\widehat{\mathbb C}\to \widehat{\mathbb C}$ be a rational map of 
degree $d\geq 2$. By the Riemann-Hurwitz formula, such a map has $2d-2$ 
critical points, counted with multiplicity; we denote this set by $C_f$. The {\em postcritical set} $P_f$, is defined as the union of the forward orbits of the critical points. If $P_{f}$ is finite, then $f$ is called {\em postcritically finite}. 

To any postcritically finite rational map $f$, one can associate a finite directed graph $\Gamma_{f}$, called its {\em dynamic portrait}, which encodes the action of 
$f$ restricted to $C_f\cup P_f$. As an illustration, consider the polynomial 
$f:z\mapsto  z^2-i$. Its critical set is $C_f=\{0,\infty\}$, and its  
postcritical set is $P_f=\{-i,-1-i,i,\infty\}$. The corresponding dynamic portrait is obtained by placing a directed edge from a vertex $x$ to $f(x)$, labeled by the local degree $\deg_{f}(x)$ whenever this degree exceeds one.
\[\xymatrix{0\ar[r]^{2}&-i\ar[r]&-1-i\ar[r]&i\ar@/^1pc/[l]}\hspace{.45in}\xymatrix{\infty\ar@(ur,dr)^{2}}\]

\noindent In this example, the portrait is both a {\em polynomial portrait}, since $\infty$ is a fixed point mapping to itself with full degree, and a {\em bicritical portrait}, since two vertices (namely $0$ and $\infty$)
map with full degree. We are interested in determining which finite weighted directed graphs arise as dynamic portraits of postcritically finite bicritical rational maps.

A weighted finite directed graph with exactly two critical points that satisfies the Riemann-Hurwitz restriction (see Section \ref{sect:preliminaries}) is called an \emph{abstract bicritical portrait}. The central goal of this work is to understand which such abstract portraits can be realized by Thurston maps and, more specifically, to identify those that are realized only by rational maps.

\noindent{\bf Thurston's Theorem.} Let $S^2$ denote an oriented topological 
2-sphere, and let $f\co S^2 \to S^2$ be 
an orientation-preserving branched cover of degree $d\ge 2$ so that 
the postcritical set $P_f$ is finite. We call such a map $f$ a 
{\em Thurston map}.  Two Thurston maps  $f:(S^2,P_f)\to (S^2,P_f)$ 
and $g:(S^2,P_g)\to (S^2,P_g)$ are {\em combinatorially equivalent} 
if there are orientation-preserving homeomorphisms 
$h_0, h_{1}:(S^2,P_f)\to (S^2,P_g)$ such that $h_0$ is isotopic to $h_{1}$ relative to $P_f$ and 
 $h_0\circ f = g\circ h_1$. The {\em orbifold $\mathcal{O}_{f}$ associated} to $f$ is the topological orbifold which has underlying space $S^2$ and whose weight $\nu(x)$ at $x\in S^2$ is given by the least common multiple of the local degree of $f$ over all the iterated preimages of $x$. The orbifold $\mathcal{O}_{f}$ is said to be hyperbolic if the Euler characteristic $\chi(\mathcal{O}_{f})=2-\sum_{x\in P_{f}}(1-1/\nu(x))$ is negative. 

In the early 1980's, William Thurston addressed the problem of determining when a Thurston map $f$ is combinatorially equivalent to a rational map $F:\widehat{\mathbb{C}}\to\widehat{\mathbb{C}}$. To state his topological characterization, when $\mathcal{O}_{f}$ is hyperbolic, we first need some terminology.

A {\em multicurve} $\Delta$ is a finite collection of simple disjoint curves in $S^2\setminus P_f$, no two of which are homotopic in $S^{2}\setminus P_{f}$. All components $\delta\in \Delta$ are also required to be {\em essential}; that is, each connected component of $S^{2}\setminus \delta$  contains at least two points of $P_{f}$. The multicurve $\Delta$ is 
said to be $f$-$stable$ if for all 
$\delta\in\Delta$, we have that every essential component of $f^{-1}(\delta)$ is homotopic in $S^2\setminus P_f$ to some $\delta'\in\Delta$. 

Given an $f$-stable multicurve $\Delta$, Thurston defined a matrix $A^{\Delta}$ that encodes how different components of $f^{-1}(\Delta)$ map to $\Delta$. The matrix $A^{\Delta}:\mathbb{R}^{|\Delta|}\to\mathbb{R}^{|\Delta|}$ is defined in coordinates by 
$$A^{\Delta}_{\gamma\delta}=\ds\sum_{\alpha}\ds\frac{1}{\deg(f:\alpha\to\delta)},$$
where the sum is taken over components $\alpha$ of $f^{-1}(\delta)$ which are homotopic to $\gamma$ in $S^{2}\setminus P_{f}$. The matrix $A^{\Delta}$ has non-negative real entries, by the Perron-Frobenius theorem its spectral radius $\lambda_{\Delta}$ is a positive real eigenvalue with non-negative eigenvector. The multicurve $\Delta$ is an {\em obstruction} provided that $\lambda_{\Delta}\geq 1$. 
If the Thurston map $f$ admits an obstruction, $f$ is said to be 
{\em obstructed}. If not, $f$ is said to be {\em unobstructed}. 

\begin{thm*}[Thurston's Topological Characterization \cite{DH}] Let $f:(S^2,P_{f})\to (S^2,P_{f})$ be a Thurston map with hyperbolic orbifold. Then $f$ is combinatorially equivalent to a rational map
$F$ if and only if $f$ is unobstructed. In this case, the rational map $F$ is unique up to conjugation by M\"obius transformations.
\end{thm*}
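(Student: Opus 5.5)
The plan follows the Douady--Hubbard strategy: study the pullback map on Teichm\"uller space. Let $\mathcal{T}_f$ be the Teichm\"uller space of $S^2$ marked by $P_f$. Given $\tau\in\mathcal{T}_f$, represented by a complex structure on $S^2$, pull it back by $f$ to obtain a new complex structure making $f$ holomorphic. This defines a map $\sigma_f\colon\mathcal{T}_f\to\mathcal{T}_f$. The fixed points of $\sigma_f$ correspond exactly to rational maps combinatorially equivalent to $f$. Concretely, if $\sigma_f(\tau)=\tau$, the homeomorphisms $h_0,h_1$ are the markings and $F=h_0\circ f\circ h_1^{-1}$ is holomorphic. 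So the theorem reduces to showing that $\sigma_f$ has a fixed point if and only if there is no obstruction, and that the fixed point is unique.

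\textbf{Contraction.} $\sigma_f$ is holomorphic, and its coderivative is the pushforward $f_*$ on integrable quadratic differentials on $\widehat{\mathbb C}\setminus P_f$. This operator has norm at most $1$, with equality only in degenerate situations. The hyperbolicity of $\mathcal{O}_f$ rules those out, giving $\|f_*\|<1$ pointwise. By the Schwarz lemma for the Teichm\"uller metric, $\sigma_f$ does not increase distances. Moreover, some iterate $\sigma_f^{n}$ strictly contracts, uniformly on compact sets. This immediately gives uniqueness of a fixed point, hence uniqueness of $F$ up to M\"obius conjugation.

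\textbf{Necessity.} Suppose $F$ is rational and $\Delta$ is a multicurve with $\lambda_\Delta\ge1$. Take the Perron--Frobenius eigenvector and build annuli around the curves of $\Delta$ with moduli proportional to its entries. Pulling back by $F$ and using superadditivity (Gr\"otzsch) of moduli, one gets an invariant family of arbitrarily large moduli. This contradicts the fact that the hyperbolic metric on $\widehat{\mathbb C}\setminus P_F$ is fixed. Equivalently, the McMullen-type inequality shows that a rational map with hyperbolic orbifold has $\lambda_\Delta<1$ for every stable multicurve.

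\textbf{Sufficiency (the main obstacle).} Assume $f$ is unobstructed. Start at any $\tau_0$ and consider the orbit $\tau_n=\sigma_f^n(\tau_0)$. If the orbit projects into a compact subset of moduli space, then uniform contraction of an iterate plus the bounded step $d(\tau_n,\tau_{n+1})\le d(\tau_0,\tau_1)$ forces convergence to a fixed point. So the key claim is: if the images in moduli space leave every compact set, then $f$ is obstructed.

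To prove this claim, I would use the collar lemma. Fix a small threshold $\epsilon$ and let $\Gamma_n$ be the set of curves of hyperbolic length below $\epsilon$ in $\tau_n$. These curves are disjoint, and their number is bounded by $|P_f|-3$. Compare lengths in $\tau_{n+1}$ and $\tau_n$: since $f$ is a covering away from $P_f$, a short curve $\gamma$ in $\tau_{n+1}$ has preimages in $\tau_n$. The moduli of the annuli about these curves satisfy roughly $1/\ell_{n+1}\lesssim \sum A_{\gamma\delta}/\ell_n$, up to bounded additive error. One must then find a gap in the spectrum of lengths so that the set of very short curves is stable along the orbit. Taking a limit multicurve $\Delta$ that recurs, the resulting inequality gives $\lambda_\Delta\ge1$.

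Controlling the additive errors and arranging $f$-stability of the limiting multicurve is where the real difficulty lies. I would handle it with a pigeonhole choice of length thresholds $\epsilon_k$ separated by large factors.
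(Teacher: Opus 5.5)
The paper gives no proof of this statement. It is quoted from Douady--Hubbard, so the comparison has to be with their argument. Your outline follows its architecture: the pullback map $\sigma_f$, weak contraction with a strictly contracting iterate, convergence when the orbit stays in a compact part of moduli space, and collars with a length inequality for short curves. Two steps, however, do not go through as written.

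\emph{Necessity.} The argument as written fails when $\lambda_\Delta=1$, and it never uses the hypothesis that $\mathcal O_f$ is hyperbolic.
In the fixed conformal structure of $\widehat{\mathbb C}\setminus P_F$, annuli homotopic to a given essential curve have bounded modulus, so you cannot start with arbitrarily large moduli. Pulling back by $F$ gives, via Gr\"otzsch, moduli at least $\lambda_\Delta M v_\gamma$, and iterating yields a contradiction only when $\lambda_\Delta>1$. When $\lambda_\Delta=1$ nothing grows.

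The argument cannot be correct without the hyperbolic-orbifold hypothesis. The flexible Latt\`es map induced by $z\mapsto 2z$ on a torus is rational, yet each essential curve has two preimage components homotopic to it, each mapping with degree $2$, so $\lambda=1$. Appealing to McMullen's inequality only restates this direction of the theorem.

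The missing ingredient is that a fixed point $\tau^*$ of $\sigma_f$ attracts every orbit. Weak contraction keeps $\sigma_f^n(\tau_0)$, together with its geodesic segments to $\tau^*$, inside the compact ball $\overline{B}(\tau^*,d(\tau_0,\tau^*))$. On that ball an iterate of $\sigma_f$ contracts uniformly, so $\sigma_f^n(\tau_0)\to\tau^*$. Now start from a $\tau_0$ in which the curves of $\Delta$ have disjoint collars of moduli $Mv_\gamma$ with $M$ huge. Gr\"otzsch and $\lambda_\Delta\ge 1$ then give $\ell_{\sigma_f^n(\tau_0)}(\gamma)\le \pi/(Mv_\gamma)$ for all $n$ whenever $v_\gamma>0$. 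For large $M$ this is incompatible with convergence to $\tau^*$. This is exactly where the hyperbolic orbifold is used.

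\emph{Sufficiency.} Here the decisive step is left open. The suggested endgame, a limit multicurve $\Delta$ that recurs, has no justification: there are infinitely many multicurves, and nothing forces the set of short curves to repeat. Note also that $f\colon(S^2,\tau_{n+1})\to(S^2,\tau_n)$ is the holomorphic map. So short curves at time $n+1$ are preimage components of short curves at time $n$, not conversely; your displayed inequality does have the right orientation.

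The argument closes if you run it in the contrapositive using three facts.
First, with $D=d(\tau_0,\tau_1)$, every hyperbolic length changes by a factor at most $e^{2D}$ per step.
Second, an essential preimage of a curve of length $<\epsilon$ at time $n$ has length $<d\epsilon$ at time $n+1$. This holds because $\widehat{\mathbb C}\setminus f^{-1}(P_f)\to\widehat{\mathbb C}\setminus P_f$ is a local isometry, and the inclusion $\widehat{\mathbb C}\setminus f^{-1}(P_f)\subset\widehat{\mathbb C}\setminus P_f$ does not increase hyperbolic lengths. Hence, if successive pigeonhole thresholds differ by a factor larger than $d\,e^{4Dm}$, the curves below a gap at time $n$ form one and the same $f$-stable multicurve $\Gamma$ for the next $m$ steps.
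Third, Thurston matrices of stable multicurves have size at most $|P_f|-3$ and entries in a finite set, so there are only finitely many of them. If $f$ is unobstructed, there are therefore $m$ and $c<1$ with $\|A_\Gamma^m\|\le c$ for all of them.

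Iterating your inequality over such a block of $m$ steps gives $1/s_{n+m}\le\max\{c/s_n+K',\,C\}$ for the systole $s_n$ of $\tau_n$. So the orbit stays in a compact part of moduli space.

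Even in that compact case, the orbit need not lie in a compact subset of $\mathcal T_f$. The uniform contraction you invoke therefore has to come from two facts. First, $\|d_\tau\sigma_f\|$ depends continuously on the rational map $(\widehat{\mathbb C},\sigma_f(\tau))\to(\widehat{\mathbb C},\tau)$, up to M\"obius changes of coordinates. Second, these maps form a compact family over a compact subset of moduli space.
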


\noindent {\bf Levy cycles.} Checking for the existence of Thurston obstructions is highly challenging, as one must generally search through infinitely many multicurves. However, for the family of bicritical Thurston maps, the criterion for a Thurston map to be combinatorially equivalent to a rational map simplifies significantly (see \cite{TanL}). Specifically, searching for arbitrary Thurston obstructions reduces to searching for Levy cycles.

A {\em Levy cycle} for the Thurston map $f:(S^2,P_f)\to (S^2,P_f)$ is 
a circularly ordered multicurve $\Delta=\{\delta_0,\ldots, \delta_{n-1},\delta_{n}=\delta_0\}$ on $S^2\setminus P_f$ such that for every  $i$ at least one component $\delta'_{i-1}$ of $f^{-1}(\delta_i)$ is homotopic to $\delta_{i-1}$ in $S^{2}\setminus P_{f}$, and $\deg(f:\delta'_{i-1}\to \delta_{i})=1$.

\begin{thm*}[Lei \cite{TanL}] Let $f:(S^2,P_f)\to (S^2,P_f)$ be a bicritical Thurston map. Then $f$ is combinatorially equivalent to a rational map if and only if $f$ does not have a Levy cycle. 
﻿\end{thm*}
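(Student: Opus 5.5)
We prove the two implications separately, and first dispose of the non-hyperbolic orbifolds. For a bicritical map the exceptional orbifolds are few: signature $(\infty,\infty)$, and the Euclidean signatures with at most four postcritical points, e.g. $(2,2,2,2)$. We would check these against the classical description of Euclidean Thurston maps: such a map is either Thom equivalent to a power map or Lattès-type map, or it is a non-rational map whose obstruction is an invariant curve pulled back with degree one, i.e.\ a Levy cycle of length one. From now on assume $\mathcal O_f$ is hyperbolic, so that Thurston's Topological Characterization applies.

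\emph{A Levy cycle gives an obstruction.} Let $\Delta$ be a Levy cycle. Pull back repeatedly and keep the essential components; the homotopy classes obtained form a finite stable multicurve $\Delta'\supseteq\Delta$. Finiteness follows from the bound on the number of disjoint non-homotopic curves in $S^2\setminus P_f$, after discarding whatever curves intersect (the standard McMullen argument that any multicurve with $\lambda\ge1$ can be completed). In $A^{\Delta'}$ the entries $A_{\delta_{i-1}\delta_i}\ge 1$ form a cycle. Hence $(A^{\Delta'})^n$ has a diagonal entry $\ge1$, and so $\lambda_{\Delta'}\ge1$. Thus $f$ is obstructed and, by Thurston's theorem, not equivalent to a rational map.

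\emph{Converse, the key bicritical observation.} Let $v_1,v_2$ be the two critical values and $\gamma$ an essential curve. If $\gamma$ separates $v_1$ from $v_2$, each complementary disk pulls back to a single disk mapped with degree $d$. Hence $f^{-1}(\gamma)$ is a single curve mapping with degree $d$, and the corresponding column of $A$ has at most one nonzero entry, equal to $1/d$. If $\gamma$ does not separate them, one complementary disk $D_\gamma$ contains no critical value. Then $f^{-1}(\gamma)$ consists of $d$ curves, each bounding a disk mapped homeomorphically onto $D_\gamma$. Now let $\Gamma$ be an irreducible obstruction with $\lambda_\Gamma\ge1$, and assume there is no Levy cycle. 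Then the directed graph of degree-one pullbacks among curves of $\Gamma$ is acyclic. Every cycle in the graph of $A^\Gamma$ must therefore pass through a separating curve, picking up a factor $1/d$ there.

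\emph{Main obstacle: showing $\lambda_\Gamma<1$.} The difficulty is that several of the $d$ degree-one preimages may be homotopic to the same curve. Products of integer entries along a path can then outweigh the factor $1/d$. To control this we would use the geometry of the disks $D_\gamma$. The separating curves of $\Gamma$ are pairwise disjoint and all separate $v_1$ from $v_2$, so they are linearly nested. Parallel degree-one preimages of a nonseparating $\gamma$ bound disjoint homeomorphic copies of $D_\gamma$, each containing at least two points of $P_f$. Counting postcritical points in these disks should bound the total weight flowing from a nonseparating curve. We would try to construct an explicit positive vector $w$ (weight $1$ on separating curves, and the number of postcritical points in $D_\gamma$ on nonseparating ones) with $wA^\Gamma<w$ coordinatewise along every cycle. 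That would give $\lambda_\Gamma<1$, a contradiction.
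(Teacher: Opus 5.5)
The paper gives no proof of this statement; it quotes it from Tan Lei. So your proposal has to stand on its own, and it has two genuine gaps.

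The first gap is the non-hyperbolic case. Your claimed dichotomy for Euclidean maps is false, and with it the statement as quoted. Let $L=\bigl(\begin{smallmatrix}2&1\\2&2\end{smallmatrix}\bigr)$ act on $\mathbb R^2/\mathbb Z^2$ and pass to the quotient by $z\mapsto -z$. The result is a degree-$2$ bicritical Thurston map with orbifold $(2,2,2,2)$ and $|P_f|=4$. Its portrait is $c_1\mapsto v_1$, $c_2\mapsto v_2$, $v_1,v_2\mapsto x_1\mapsto x_2\mapsto x_2$, which is portrait (6) of Theorem \ref{main:thm:1} with $d=n=2$. A rational map equivalent to it would be a Latt\`es map, whose linear part is multiplication by a non-real $\alpha$ with $|\alpha|^2=2$. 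The equivalence would conjugate $\pm L$ to that matrix in $GL_2(\mathbb Z)$, which is impossible because $L$ has eigenvalues $2\pm\sqrt2$. On the other hand, lifts of the curve of slope $v$ have slope $L^{-1}v$. A Levy cycle (a single curve, since $|P_f|=4$) would therefore be a rational eigenvector of $L$, and there is none. So you must assume $\mathcal O_f$ hyperbolic, as in the version of Thurston's theorem quoted in the paper, rather than try to check this case.

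Under that assumption your forward direction is fine. The remark about discarding intersecting curves is unnecessary: the curves of $\Delta$ reappear among their own lifts, so the essential classes of $f^{-k}(\Delta)$ form an increasing chain of multicurves. Alternatively, use that every multicurve of a rational map has $\lambda<1$.

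The second gap is the converse, which is the whole content of the theorem and which you leave as a hope. Your preliminary observations are correct: a separating curve has one lift of degree $d$, a nonseparating one has $d$ lifts of degree one, the degree-one graph is acyclic, and the separating curves are nested.

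The obstacle you single out, however, is harmless. Suppose a nonseparating $\gamma\in\Gamma$ has two homotopic essential lifts, and let $E_\gamma$ be the side of $\gamma$ containing $v_1,v_2$. The annulus between the two lifts contains $f^{-1}(E_\gamma)$, so $f^{-1}(E_\gamma)$ misses $P_f$. Every point of $P_f$, except a critical value lying outside $f(P_f)$, has a postcritical preimage. Hence $E_\gamma\cap P_f=\{v_1,v_2\}$, every other curve of $\Gamma$ lies in $D_\gamma$, and no curve of $\Gamma$ separates $v_1$ from $v_2$. All lifts then have degree one, and irreducibility gives a Levy cycle at once.

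The real difficulty lies elsewhere. Even when all entries of nonseparating columns are in $\{0,1\}$, such a curve can have up to $d$ distinct lifts in $\Gamma$. So a priori the number of degree-one lift paths from $\tilde\sigma$ back to separating curves can exceed $d$. You must show that the first-return matrix of $A^\Gamma$ to the nested separating curves has spectral radius $<1$, and that needs a geometric count you do not supply.

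Your weight $w$ does not provide this count.
\begin{itemize}
\item For $d=2$, if $\tilde\sigma$ is homotopic to a nonseparating $\beta\in\Gamma$ with $|D_\beta\cap P_f|\ge3$, then $(wA^\Gamma)_\sigma=w_\beta/2>1=w_\sigma$.
\item On nonseparating columns, a point of $D_\gamma\cap P_f$ may have postcritical preimages in several lifted disks. A lifted disk may also contain the critical values, so that $D_{\beta_j}$ is its complement. Either way, $\sum_j w_{\beta_j}<w_\gamma$ fails.
\item An inequality ``along every cycle'' is not a usable criterion. Rescaling by $w$ does not change cycle products, and all cycle products being $<1$ does not force $\lambda<1$: the $2\times2$ matrix with all entries $\tfrac12$ has every cycle product below $1$ but spectral radius $1$.
\end{itemize}
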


Motivated by this result we have produced the list of all the abstract bicritical dynamic portraits that are only realizable by rational maps.

\begin{thm}\label{main:thm:1} Let $d\geq 2$ and let $f$ be a Thurston map with $|P_{f}|\geq 4$ and whose dynamic portrait is isomorphic to any of the following abstract bicritical dynamic portraits. 

\begin{itemize}
\item[(1)] $\xymatrix{x_{1}\ar[r]^{d}&x_{2}\ar[r]&\cdots\ar[r]&x_{n}\ar@/^1pc/[lll]}$\hspace{.45in}$\xymatrix{\infty\ar@(ur,dr)^{d}}$\vspace{.1in}

\item[(2)] $\xymatrix{\star\ar[r]^{d}&x_{1}\ar[r]&\cdots\ar[r]&x_{n}\ar@(ur,dr)}$\hspace{.5in}$\xymatrix{\infty\ar@(ur,dr)^{d}}$\vspace{.1in}

\item[(3)] $\xymatrix{x_{n}\ar[r]^{d}&a\ar[r]^{d}&b\ar[r]&x_{1}\ar[r]&\cdots\ar[r]&x_{n-1}\ar@/^1pc/[lllll]}$\vspace{.1in}

\item[(4)] $\xymatrix{\star\ar[r]^{d}&a\ar[r]^{d}&b\ar[r]&x_{1}\ar[r]&\cdots\ar[r]&x_{n}\ar@(ur,dr)}$\vspace{.1in}

\item[(5)] $\xymatrix{\star\ar[r]^{d}&a\ar[r]&x_{1}\ar[r]&x_{2}\ar[r]&\cdots\ar[r]&x_{n}\ar[r]^{d}&b\ar@/^1pc/[llll]}$\vspace{.1in}

\item[(6)] $\xymatrix@R=0.05pc{
\star \ar[r]^d & a \ar[dr] & & & & \\
& & x_1 \ar[r] & x_2 \ar[r] & \cdots \ar[r] & x_n \ar@(ur,dr) [] \\
\star \ar[r]^d & b \ar[ur] & & & &
}$

\item[(7)] $\xymatrix{\star\ar[r]^{2}&x\ar[r]&y\ar[r]&x_{1}\ar[r]^{2}&x_{2}\ar[r]&x_{3}\ar@/^1pc/[ll]}$
\end{itemize}

Then $f$ is combinatorially equivalent to a rational map. 
\end{thm}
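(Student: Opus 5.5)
By Lei's theorem, a bicritical Thurston map is combinatorially equivalent to a rational map exactly when it has no Levy cycle. So the plan is to assume that a Thurston map $f$ with one of the portraits (1)--(7) and $|P_f|\ge 4$ has a Levy cycle $\Delta=\{\delta_0,\dots,\delta_{n-1}\}$, and derive a contradiction. One preliminary check is needed: the orbifold must be hyperbolic. This holds because $|P_f|\ge 4$ and every listed portrait contains a critical point of local degree $d$ in a periodic cycle or a strictly preperiodic tail. The few Euclidean $(2,2,2,2)$ configurations either do not occur for these portraits, or would have to be checked by hand.

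The key tool is the standard degree-one structure of Levy cycles. Since each $\delta'_{i-1}\to\delta_i$ has degree one, each $\delta_i$ bounds a disk $D_i$, chosen consistently around the cycle. Then $f$ maps a disk $D'_{i-1}$, isotopic to $D_{i-1}$ rel $P_f$, homeomorphically onto $D_i$, or onto the complement of $D_i$. Passing to iterates and using that a bicritical map has only two critical values, I would prove the following lemma. For a Levy cycle of a bicritical map, some choice of disks $D_i$ satisfies three conditions:
\begin{itemize}
\item[(a)] each $D_i$ contains no critical value, or its preimage disk contains no critical point;
\item[(b)] $f$ restricts to a bijection from $P_f\cap D_{i-1}$ onto $P_f\cap D_i$, up to the standard accounting;
\item[(c)] $\bigcup_i (D_i\cap P_f)$ is a forward-invariant collection of postcritical points containing no critical point.
\end{itemize}
This is the usual statement that a Levy cycle is essentially a "degenerate" cycle: its postcritical points are permuted by $f$ with local degree one everywhere. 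Since each disk must hold at least two postcritical points, we get a forward-invariant set $Q\subset P_f$ with the following properties. Each point of $Q$ is periodic, or at least $f$ restricted to $Q$ is injective and $f(Q)=Q$ after iteration. No point of $Q$ is critical. $Q$ splits into the $D_i$, each containing $\ge2$ points, and the complement of each $D_i$ also contains $\ge 2$ points of $P_f$. Also, because $f\colon D'_{i-1}\to D_i$ is a homeomorphism and $f^{-1}(D_i)$ has $d$ components unless $D_i$ contains a critical value, one can show that the points of $Q$ are periodic. They therefore lie in periodic cycles avoiding critical points.

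The second step is a case check against the portraits, using the combinatorial lemma. In (1), (3), (5) and (7) every periodic cycle other than $\infty$ contains a critical point, and $\infty$ is a critical fixed point. So $Q$ is empty, a contradiction. In (2), (4) and (6) the only non-critical periodic point is the fixed point $x_n$, so $|Q|\le1$, contradicting that some $D_i$ contains two points of $Q$. For (7), the cycle $x_1\to x_2\to x_3$ contains the critical $x_1$, and $\star$ is the other critical point, which is strictly preperiodic. So again there is no critical-free periodic cycle.

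The main obstacle is making the lemma precise: proving rigorously that the postcritical points enclosed by a degree-one Levy cycle are periodic and noncritical. The difficulty is the possibility that $f$ maps $D'_{i-1}$ onto the complementary disk, since then the enclosed points are not simply pushed forward. My first attempt would handle this by counting. Pulling back a disk containing a critical value produces fewer than $d$ preimage components. Along the cycle, the preimage disks contain only degree-one points, so strictly preperiodic points, or critical points entering the disks, would force a strictly decreasing count of postcritical points in the disks. That is impossible around a cycle. If this becomes messy, I would fall back on a direct argument by cases on which side of each $\delta_i$ contains $\infty$, or the critical values, for each portrait.
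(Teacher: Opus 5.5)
\textbf{There is a genuine gap: the central lemma of your plan is false for bicritical maps.} Your reduction to Lei's theorem, and your final case check, are fine for \emph{degenerate} Levy cycles. The trouble is the lemma itself. By Lemma~\ref{lemma:lem2} no $\delta_i$ separates the two critical values, so let $E_i$ be the side of $\delta_i$ containing no critical value. The disk bounded by $\delta'_{i-1}$ does map homeomorphically onto $E_i$. But its postcritical points may be those of the \emph{complement} of $E_{i-1}$, i.e.\ of the side carrying both critical values.

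This flip is not a nuisance to be removed by counting; it is exactly how the paper manufactures obstructions.
\begin{itemize}
\item In Lemma~\ref{lem:reductionlem}(1), $f=g\circ h$ maps the side of $\gamma$ containing $A\supseteq V_\tau$ homeomorphically onto $\overline{D_\gamma}$.
\item In Case 2.I of Section~\ref{sect:deg2} (one cycle containing two non-adjacent critical points), the $B$-side contains both periodic critical points and the $A$-side contains both critical values. So no choice of disks gives a critical-free invariant set $Q$.
\item The formal self-mating of the basilica is another instance.
\end{itemize}
Your counting heuristic cannot exclude this. Across a flip you are comparing $|P_f\setminus E_{i-1}|$ with $|P_f\cap E_i|$, so there is no monotone quantity around the cycle. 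Your case check therefore only disposes of the degenerate half. The paper avoids the issue by working curve by curve: it pulls back core arcs and shows inductively that the unique essential preimage eventually separates the critical values.

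\textbf{What is missing is portrait-specific input in the flip case, and it is easy to supply.} In a flip, both critical values lie in a disk on which $f$ is injective. Hence neither critical value is a critical point, and $\tau(a)\neq\tau(b)$.
\begin{itemize}
\item This excludes flips in (1)--(4), where $\infty$, resp.\ $a$, is both a critical value and a critical point.
\item It also excludes flips in (5) and (6), where $\tau(a)=\tau(b)=x_1$.
\end{itemize}
So in (1)--(6) every step is degenerate, and your $Q$-argument finishes.

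Portrait (7) needs one more count. Degenerate steps are impossible there, since the critical-free side would be $\{y,x_3\}$, and both points map to $x_1$. So every step is a flip, which gives $x_1\in E_j$ for all $j$. The homeomorphic preimage of $E_{j+1}$ must then contain a preimage of $x_1$, i.e.\ $y$ or $x_3$. This forces $|E_j\cap P_f|=2$ for every $j$. Yet the images of $x$, $x_2$ and that preimage give $E_{j+1}\cap P_f\supseteq\{y,x_1,x_3\}$, a contradiction.

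\textbf{Your orbifold check is also wrong.} In (6) with $d=2$, $n=2$, all four postcritical points have weight $2$, so the orbifold is $(2,2,2,2)$. The Lattès map $z\mapsto\frac{i}{2}(z+z^{-1})$ realizes this portrait. Lei's criterion needs a hyperbolic orbifold, and here the conclusion actually fails. Take $A=\left(\begin{smallmatrix}4&-1\\2&0\end{smallmatrix}\right)$ and the map induced by $z\mapsto Az$ on the quotient of $\mathbb{R}^2/\mathbb{Z}^2$ by $z\mapsto -z$.
\begin{itemize}
\item It realizes the same portrait: the critical values $[0,\tfrac12]$ and $[\tfrac12,\tfrac12]$ both map to $[\tfrac12,0]$, which maps to the fixed point $[0,0]$.
\item It has no Levy cycle, since the eigenvalues $2\pm\sqrt2$ are irrational.
\item It is not equivalent to a rational map, since a Lattès lift has linear part with non-real or equal eigenvalues.
\end{itemize}
So this case must be excluded from the statement rather than ``checked by hand.'' The paper, which applies Lei's theorem without an orbifold hypothesis, has the same blind spot.
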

\noindent To prove Theorem \ref{main:thm:1}, one needs to analyze each bicritical portrait to show that such a map $f$ cannot have Levy cycles. It then follows from Lei's Theorem that $f$ is {\em combinatorially equivalent} to a rational map.

\begin{thm}\label{main:thm:2} Suppose that $\Gamma$ is an abstract bicritical dynamic portrait of degree $d$, with at least four postcritical vertices and that satisfies one of the following properties:
\begin{itemize}
\item[(i)] $d\geq2$ and $\Gamma$ contains a nonattracting cycle of length at least 2; 
\item[(ii)] $d\ge3$ and $\Gamma$ contains a subgraph isomorphic to 
\begin{eqnarray*}
\xymatrix@!R=3pt@!C=7pt{
\star\ar[r]^{d} &a\ar[r]&p\\}\qquad 
\xymatrix@C=13pt{\star\ar[r]^{d} &b\ar[r]&q\\}
\end{eqnarray*}
with $p\neq q$ (this includes the case $\xymatrix{\star\ar[r]^{d}&a\ar[r]&p\ar[r]^{d}&b\ar[r]&q}$).
 \end{itemize}
 Then $\Gamma$ is realized by a Thurston map that has a Levy cycle.
\end{thm}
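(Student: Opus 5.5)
We prove Theorem~\ref{main:thm:2} constructively: for each portrait $\Gamma$ satisfying (i) or (ii) we build an explicit branched cover $f$ realizing $\Gamma$ together with an explicit Levy cycle. The general method is to start from a branched cover $g$ realizing $\Gamma$, for instance the rational map given by the realization results for bicritical portraits or a topological model, and then modify it by postcomposing with a homeomorphism $h$ (a Dehn twist or a point-push) supported in a region disjoint from the postcritical set. The map $f=h\circ g$ has the same dynamic portrait, because $h$ fixes $P_g\cup C_g$. What remains is to choose $h$ and a curve so that a degree-one preimage returns to the homotopy class of the curve.

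\emph{Case (i).} Suppose $\Gamma$ contains a nonattracting cycle $x_1\to x_2\to\cdots\to x_k\to x_1$ with $k\ge 2$. Nonattracting means that no critical point lies on the cycle, so $f$ has local degree $1$ at every $x_j$. We aim for a Levy cycle made of curves $\delta_j$, each bounding a disk $D_j$ that contains exactly two postcritical points. The simplest model is a ``rabbit-type'' construction: each $D_j$ contains $x_j$ together with a second point $y_j$ such that $f$ maps $D_{j}$ homeomorphically onto a disk isotopic to $D_{j+1}$. Concretely, we place the cycle inside a small disk neighbourhood on which $f$ acts as a homeomorphism, arranged by cutting the sphere along arcs joining the critical values. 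We then choose an arc $\gamma$ from $x_1$ to some other postcritical point, with its iterated lifts along the cycle. The key lemma is the following: if $\Gamma$ has at least four postcritical points, then we can choose these arcs, possibly after composing with a suitable braid or twist, so that the $k$-th lift of $\gamma$ is homotopic to $\gamma$ rel $P_f$. The boundaries of regular neighbourhoods of these arcs then form the Levy cycle, since $f$ is injective near them. When no second point $y_j$ exists on a periodic orbit, we instead take $D_j$ to contain $x_j$ and a strictly preperiodic point; such a point always exists because $|P_f|\ge4$ and there are two critical orbits. This must be done case by case according to where the critical points land.

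\emph{Case (ii).} The hypothesis gives two critical points of full degree $d\ge3$, with critical values $a\neq b$ and $f(a)=p\neq q=f(b)$. We look for a Levy cycle of length one: an essential curve $\delta$ surrounding $\{a,b\}$ with $\delta$ separating $\{a,b\}$ from $\{p,q\}$, together with a preimage $\delta'$ homotopic to $\delta$ mapping with degree one. Set $\gamma$ to be an arc from $a$ to $b$. Since $d\ge3$, the preimage $f^{-1}(\gamma)$ consists of $d$ arcs joining the two critical points, so there are enough preimage components to choose from. We take an arc $\beta$ joining $p$ and $q$ whose preimage contains a component $\beta'$ joining $a$ and $b$ homeomorphically; this requires $p,q$ to be noncritical values, or else local degree one on the relevant branch. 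We then twist so that $\beta'$ is isotopic to $\gamma$ while $f(\gamma)$ matches $\beta$. The boundary of a neighbourhood of $\beta$ pulls back to a curve around $\beta'$ with degree one, which yields the Levy cycle. The degree condition $d\ge3$ is what allows a lift to avoid separating the critical points, unlike degree $2$.

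The main obstacle is the combinatorial bookkeeping in case (i). We must show that a twist can always be chosen, for every configuration of the remaining orbits, so that the lifted curve returns to its homotopy class while staying essential; that is, each disk must keep at least two postcritical points on each side. We would handle this first by the explicit ``rabbit'' model with three or more points on the cycle, and then separately for cycles of length $2$ using a preperiodic companion point.
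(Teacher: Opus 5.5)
\textbf{Case (ii).} Your target curve $\delta$ around $\{a,b\}$ is the right one, but the mechanism you give does not make it a Levy curve. The boundary of a neighbourhood of $\beta$ surrounds $\{p,q\}$, while its pullback around $\beta'$ surrounds $\{a,b\}$. These two curves are homotopic rel $P_f$ only when $P_f=\{a,b,p,q\}$.

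What is actually needed is that the disk on the $\{a,b\}$-side of $\delta$ be carried homeomorphically onto the \emph{complementary} disk $E$, which contains all of $B=P_f\setminus\{a,b\}$. Since $E$ contains no critical value, $f^{-1}(E)$ is $d$ disks $W_1,\dots,W_d$. The realization must then be chosen so that $a,b$ are the only postcritical points in $W_1$. Equivalently, for each $x\in B$ the postcritical preimages of $x$ other than $a,b$ must sit in distinct sheets among $W_2,\dots,W_d$. Only $a$ and $b$ can lack a postcritical preimage, so each vertex has at most three postcritical preimages. Hence this works for $d\ge4$; this is Lemma~\ref{lem:reductionlem}(1) with $A=\{a,b\}$, realized by precomposing a fixed cover with a homeomorphism that sends each marked point to a chosen preimage. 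This sheet count, not avoiding separation of the critical points, is the real role of the degree hypothesis.

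For $d=3$ the count fails exactly when some vertex has three postcritical preimages, none equal to $a$ or $b$. Such a vertex lies on a cycle with no critical point, so either (i) applies or the portrait is $\star\xrightarrow{3}a\to x_1\to\cdots\to x_n$, $\star\xrightarrow{3}b\to y_1\to\cdots\to y_m\to x_n$ with $x_n$ fixed and $n\ge2$. In that portrait \emph{no} curve separating $\{a,b\}$ from $B$ is a Levy curve, for any realization:
if the degree-one lift bounds a sheet $W_i$ containing $a,b$, the three preimages $x_{n-1},x_n,y_m$ of $x_n$ would need three sheets other than $W_i$;
if instead $W_i\cap P_f=B$, then $f$ would map $B$ bijectively to itself, yet $x_1$ has no preimage in $B$.
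The paper finishes this case with a Levy cycle $\{\gamma,\alpha\}$ of length two, where $\alpha$ separates $\{a,b,x_n\}$ from the rest and $f$ interchanges the two curves (Lemma~\ref{lem:reductionlem}(2)). Your proposal has nothing playing this role.

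\textbf{Case (i).} Your ``key lemma'' is asserted rather than proved, and the plan around it is partly unworkable. If the disks $D_j$ of a Levy cycle are mapped homeomorphically along the cycle, then $f$ injects $P_f\cap D_j$ into $P_f\cap D_{j+1}$. After a full turn, every postcritical point of $D_1$ must therefore be periodic. A strictly preperiodic companion is impossible, so your treatment of $2$-cycles fails.

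The missing idea is to use a \emph{single} curve $\gamma$ around the whole cycle. No critical value lies on a nonattracting cycle, since its only preimage is a critical point. Hence both critical values lie outside $D_\gamma$, $\gamma$ is essential because $n\ge2$, and $g^{-1}(D_\gamma)$ is $d$ disks each mapped homeomorphically. Precompose $g$ with a homeomorphism that sends $D_\gamma$ onto one of these disks, $\widetilde D$, and each $x_i$ to the preimage of $x_{i+1}$ in $\widetilde D$; send the other marked points to suitable preimages in the other sheets and critical points to critical points. The result realizes $\Gamma$ and maps $D_\gamma$ homeomorphically onto itself, so $\gamma$ alone is a Levy cycle.

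Note also that postcomposing a fixed $g$ with homeomorphisms fixing $P_g$ pointwise does not obviously reach the required assignment of marked points to sheets. You would need an extra lifting argument, for example that point-pushes around a critical value permute the lifts. Precomposition by a homeomorphism that moves the marked points, as in the paper, gives this freedom directly.
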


\noindent To prove Theorem \ref{main:thm:2}, we construct a 
bicritical Thurston map $f:S^2 \to S^2$ so that it has a Levy cycle and $\Gamma_f\simeq \Gamma$. By Lei's Theorem, it follows that $f$ is not combinatorially equivalent to a rational map.

Using formal matings of quadratic polynomials and Lemma \ref{lem:reductionlem}, in Section \ref{sect:deg2} we determine the bicritical dynamic portraits of degree 2 that can be realized by an obstructed Thurston map. Combining Theorems \ref{main:thm:1} and \ref{main:thm:2}, in Section  \ref{sect:cubp} we classify the completely unobstructed abstract bicritical dynamic portraits of degree $d\geq2$.

\section{Preliminaries}\label{sect:preliminaries}
\noindent {\bf Portraits associated to Thurston maps.} Let
$f:(S^2,P_f)\to (S^2,P_f)$ be a Thurston map of degree $d$. The
\emph{dynamic portrait} of $f$ is the weighted directed graph
$\Gamma$ such that the vertex set $V(\Gamma)$ is the union of the set
$C_f$ of critical points and the set $P_f$ of postcritical points, and
for each vertex $v$ there is an edge from $v$ to $f(v)$ with weight
the local degree $\deg_f(v)$ of $f$ at $v$.  By the Riemann-Hurwitz
formula,
\[
\sum_{v\in C_f} (\deg_f(v)-1) = 2d-2.
\]
Since $f$ has degree $d$, at each vertex $v$ the sum of the weights of
the incoming edges is at most $d$. We say that $f$ is a topological
polynomial if and only if there is a vertex $v$ such that $f(v) = v$
and $\deg_f(v)=d$.

\noindent {\bf Formal Mating.} Let $\mathbb{S}$ be the unit sphere in $\mathbb{C}\times\mathbb{R}\approx \mathbb{R}^3$. Here we review some basic terminology and results about matings of polynomials following \cite{BEK} and \cite{TanL}. Let $P:\mathbb{C}\to \mathbb{C}$ and $Q:\mathbb{C}\to\mathbb{C}$ be two monic polynomials of the same degree $d\geq 2$. The {\em formal mating} of $P$ and $Q$ is the branched covering $f=P\uplus Q:\mathbb{S}\to\mathbb{S}$ as follows. We identify the dynamical plane of $P$ to the upper hemisphere $\mathbb{H}^{+}$ of $\mathbb{S}$ and the dynamical plane of $Q$ to the lower hemisphere $\mathbb{H}^{-}$ of $\mathbb{S}$ via the gnomonic projections: 
$$\nu_{P}:\mathbb{C}\to\mathbb{H}^{+}\hspace{.5in}\text{and}\hspace{.5in}\nu_{Q}:\mathbb{C}\to\mathbb{H}^{-}$$
given by 
$$\nu_{P}(z)=\ds\frac{(z,1)}{||(z,1)||}=\ds\frac{(z,1)}{\sqrt{|z|^2+1}}\hspace{.5in}\text{and}\hspace{.5in}\nu_{Q}(z)=\ds\frac{(\bar{z},1)}{||(z,1)||}=\ds\frac{(\bar{z},1)}{\sqrt{|z|^2+1}}.$$
The map $\nu_{P}\circ P\circ \nu_{P}^{-1}$ defined on the upper hemisphere and $\nu_{Q}\circ Q\circ \nu_{Q}^{-1}$ defined in the upper hemisphere extend continuously to the equator of $\mathbb{S}$ by 
$(e^{2i\pi \theta},0)\mapsto(e^{2i\pi d\theta},0)$. The two maps fit together so as to yield a branched covering map $P\uplus Q:\mathbb{S}\to\mathbb{S}$ which is called the formal mating of $P$ and $Q$. 

If $P$ and $Q$ are postcritically finite polynomials, then the formal mating $P\uplus Q$ is a Thurston map. Moreover, if $P(z)=z^2+c_{P}$ and $Q(z)=z^{2}+c_{Q}$ are both postcritically finite and $c_{P}$ and $c_{Q}$ are in conjugate limbs of the Mandelbrot set, then the formal mating $f:=P\uplus Q$ has a Levy cycle (see Theorem 4.1 of \cite{TanL}). In particular, if $P(z)=z^2+c_{P}$ is a Misiurewick polynomial or a hyperbolic polynomial\footnote{ Under these settings, $P(z)=z^2+c_{P}$ is said to be hyperbolic if $P^{\circ n}(0)=0$ for some $n\in\mathbb{Z}^{+}$.}, and $Q(z)=z^{2}+c_{Q}$ is a Misiurewick polynomial or a hyperbolic polynomial, with $c_{P}$ and $c_{Q}$ in the 1/2-limb of the Mandelbrot set (i.e., in the unique limb which is its own complex conjugate), then the formal mating $P\uplus Q$ is not combinatorially equivalent to a rational map.

\noindent {\bf Abstract dynamic portraits.} Suppose $\Gamma$ is a finite
weighted directed graph (with the weights positive integers) such that
each vertex of $\Gamma$ is the source of exactly one edge.  Let
$\tau\co V(\Gamma)\to V(\Gamma)$ be the function which takes a vertex
$v$ to the target of the edge with source $v$.  We call the weight of
the edge from $v$ to $\tau(v)$ the \emph{degree} of $\tau$ at $v$ and
denote it by $\deg(v)$.  A vertex $v$ is \emph{critical} if $\deg(v) >
1$, and is \emph{postcritical} if there are a critical vertex $w$ and
a positive integer $k$ such that $\tau^{\circ k}(w) = v$. If $v$ is a
critical vertex, then $\tau(v)$ is called a \emph{critical value}.  We
denote the set of critical vertices by $C_{\Gamma}$, and we denote the
set of postcritical vertices by $P_{\Gamma}$. We say that $\Gamma$ is
an \emph{abstract portrait} if it satisfies the following:
\begin{itemize}
\item every vertex of $\Gamma$ is either critical or postcritical, 
\item there is an integer $d\ge 2$ such that 
$\sum_{v\in C_{\Gamma}} (\deg(v)-1) = 2d-2$, and 
\item for each vertex $v$ the sum of
the weights of the edges with target $v$ is at most $d$. 
\end{itemize}
We call $d$ the
\emph{degree} of the abstract dynamic portrait.
We say that an abstract dynamic portrait $\Gamma$ is \emph{realized} by a
Thurston map $f$ if $\Gamma$ is isomorphic to the portrait of $f$ 
(as weighted directed graphs).  An abstract portrait $\Gamma$ is
\emph{realizable} if it is realized by some Thurston map.

We say that an abstract dynamic portrait $\Gamma$ is {\em completely unobstructed} if and only if $\Gamma$ is only realizable by Thurston maps that are combinatorially equivalent to rational maps. 

An abstract portrait of degree $d$ is an \emph{abstract polynomial
 portrait} if there is a vertex $v$ such that $\tau(v)=v$ and
$\deg(v)=d$.  In this case, we choose such a vertex and call it
$\infty$; the other vertices are called \emph{finite}. 

An abstract portrait of degree $d$ is an \emph{abstract bicritical dynamic
portrait} if there are exactly two vertices $v$ and $w$ such that 
$\deg(v)=\deg(w)=d$.

\begin{rem}\label{remark:rem1} The branch data of every abstract bicritical dynamic portrait of degree $d\geq2$ is realizable by the Thurston map $z\mapsto z^{d}$. By Lemma 9.1 of \cite{FPP}, it follows that every abstract bicritical dynamic portrait of degree $d\geq2$ is realizable by a Thurston map.
\end{rem}

\noindent {\bf Properties of bicritical Thurston maps.} From Lemma 5.1 of \cite{BFH}, it is well-known that if $f$ is a topological polynomial and $D\subset S^2\setminus\{\infty\}$ is a topological disk with  $\infty\notin D$, then every connected component of $f^{-1}(D)$ is  a topological disk. Since any bicritical Thurston map can be written as the postcomposition (or precomposition) of a topological polynomial and an orientation preserving homeomorphism, we have the following lemmas.

\begin{lemma}\label{lemma:lem1}
If $f$ is a bicritical Thurston map and $D$ is a topological disk that contains at most one critical value of $f$, then every connected component of $f^{-1}(D)$ is  a topological disk.
\end{lemma}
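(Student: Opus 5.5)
The plan is to reduce Lemma \ref{lemma:lem1} to Lemma 5.1 of \cite{BFH} by normalizing $f$ so that one of its critical values becomes a totally invariant point of full local degree. Let $c_1,c_2$ be the two critical points of $f$, each of local degree $d$, and let $v_1=f(c_1)$ and $v_2=f(c_2)$ be the critical values. Since $\deg_f(c_i)=d$ equals the degree of $f$, each $v_i$ has exactly one preimage, namely $c_i$, and $f^{-1}(\{v_i\})=\{c_i\}$. If $v_1=v_2$, this forces $c_1=c_2$, contradicting the fact that there are two distinct critical points of full degree. Hence $v_1\neq v_2$.

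Suppose $D$ contains at most one critical value. After relabeling, we may assume $v_2\notin D$.

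Next we choose an orientation-preserving homeomorphism $h\colon S^2\to S^2$ with $h(c_2)=v_2$. Set $g=h^{-1}\circ f$, which is a branched cover of degree $d$ with the same critical points as $f$ and the same local degrees. Then $g(c_2)=c_2$ with $\deg_g(c_2)=d$, so $g$ is a topological polynomial with $\infty:=c_2$.

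The hypothesis of Lemma 5.1 of \cite{BFH} is that $g$ is a topological polynomial; no finiteness of the postcritical set of $g$ is needed. If the cited lemma is stated only for Thurston maps, I will redo its proof, which is purely a Riemann--Hurwitz computation. This check is the main point to be careful about.

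We now have $f^{-1}(D)=g^{-1}(h^{-1}(D))$. The set $D'=h^{-1}(D)$ is a topological disk, and $\infty=c_2=h^{-1}(v_2)\notin D'$ because $v_2\notin D$. Lemma 5.1 of \cite{BFH} then gives that every component of $g^{-1}(D')$ is a disk, which is exactly the claim for $f$.

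Alternatively, the argument can be made self-contained. Let $U$ be a component of $f^{-1}(D)$ and let $k=\deg(f|_U)$. By Riemann--Hurwitz, $\chi(U)=k\cdot\chi(D)-r=k-r$, where $r$ is the total ramification of $f$ inside $U$. Only $c_1$ can ramify in $U$, since $c_2$ maps to $v_2\notin D$. So either $r=0$, or $r=d-1$ and $c_1\in U$.

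In the first case, $U\to D$ is an unramified covering of a simply connected set, so $k=1$ and $\chi(U)=1$. In the second case, $c_1\in U$. Because $f^{-1}(v_1)=\{c_1\}$ and $c_1$ has local degree $d$, every point near $v_1$ has all $d$ of its preimages in $U$, so $k=d$ and $\chi(U)=d-(d-1)=1$.

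In both cases $U$ is a connected planar surface with $\chi(U)=1$. Since $U$ is open in $S^2$ and its complement is nonempty (it contains $c_2$), $U$ is a disk.
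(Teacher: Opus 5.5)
Your proof is correct and follows the paper's route: you write $f=h\circ g$ with $g$ a topological polynomial (fixing $c_2$ with full local degree) and apply Lemma 5.1 of \cite{BFH} to the disk $h^{-1}(D)$, which avoids $\infty=c_2$. Your worry about postcritical finiteness goes away if you also require $h(c_1)=v_1$, since then $P_g=\{c_1,c_2\}$. In any case, your self-contained Riemann--Hurwitz argument, showing $\chi(U)=1$ for each component $U$, is valid on its own and needs neither \cite{BFH} nor the normalization.
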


\begin{lemma}\label{lemma:lem2} Let $f$ be a bicritical Thurston map of degree $d$. If $\gamma$ is an essential simple closed curve in $S^{2}\setminus P_{f}$ that separates the two critical values of $f$, then $\gamma$ cannot be a member of a Levy cycle.
\end{lemma}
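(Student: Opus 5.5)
Suppose, for contradiction, that $\gamma=\delta_0$ belongs to a Levy cycle $\{\delta_0,\ldots,\delta_{n-1}\}$ and separates the two critical values $v_1=f(c_1)$ and $v_2=f(c_2)$. The plan is to show directly that every component of $f^{-1}(\gamma)$ is either inessential or maps to $\gamma$ with degree $d\ge 2$. That excludes the degree-one preimage that the Levy condition demands.

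First, describe the preimage. Write $S^2\setminus\gamma=D_1\sqcup D_2$, where $D_j$ is an open disk containing exactly one critical value $v_j$. Each $D_j$ contains exactly one critical value, so by Lemma \ref{lemma:lem1} every component of $f^{-1}(D_j)$ is a disk. Moreover, by the Riemann--Hurwitz formula, $f^{-1}(D_j)$ is connected. Indeed, the preimage is a union of disks $U_1,\dots,U_k$ whose degrees sum to $d$, and $\chi(f^{-1}(D_j))=d\cdot 1-(d-1)=1$. Hence $k=1$, and $U:=f^{-1}(D_j)$ is a single disk on which $f$ has degree $d$, totally branched at $c_j$.

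It then follows that $f^{-1}(\gamma)=\partial f^{-1}(D_1)$ is a single simple closed curve $\tilde\gamma$, and $f\colon\tilde\gamma\to\gamma$ has degree $d\ge 2$. Equivalently, $f$ restricted to $S^2\setminus f^{-1}(\gamma)$ consists of two disks, each mapping with degree $d$.

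Now apply the Levy condition with $i=1$ (indices mod $n$). Some component $\delta_0'$ of $f^{-1}(\delta_1)$ is homotopic to $\delta_0=\gamma$ with degree one. Note that the lemma requires $\gamma$ itself to be excluded as a member, and every member $\delta_i$ of the cycle is the image of a degree-one preimage of the next curve. Applying the condition at the index where $\gamma=\delta_{i}$ appears as a target, some component of $f^{-1}(\gamma)$ must be homotopic to $\delta_{i-1}$ and map with degree one. But the only component of $f^{-1}(\gamma)$ is $\tilde\gamma$, which maps with degree $d\ge 2$, a contradiction.

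The main obstacle is the count showing that $f^{-1}(D_j)$ is connected. It requires that exactly one critical point lies over $D_j$ and that $D_j$ contains no other critical value. The latter holds because $f$ is bicritical, so its only critical values are $v_1$ and $v_2$. One should also check that the local degree $d$ at $c_j$ forces the total branching $d-1$ within $f^{-1}(D_j)$. Together these give a single preimage component of degree $d$.
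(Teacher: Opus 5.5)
Your proposal is correct and follows the paper's proof: the unique component of $f^{-1}(\gamma)$ maps to $\gamma$ with degree $d\ge 2$, so the Levy condition fails at the index where $\gamma$ is the target curve. Your Riemann--Hurwitz count fills in what the paper only asserts, and the remark about applying the condition at $i=1$ is irrelevant and can be dropped. (A quicker argument: the local degree $d$ at the critical point over $D_j$ forces $f^{-1}(D_j)$ to be connected, and since $S^2\setminus f^{-1}(\gamma)$ then has two components, $f^{-1}(\gamma)$ is a single curve.)
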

\begin{proof} In fact, if $\gamma$ separates the two critical values of $f$, then $f^{-1}(\gamma)$ has exactly one connected component and $f$ maps this connected component to $\gamma$ with degree $d>1$. So, $\gamma$ cannot be a member of a Levy cycle.
\end{proof}

\section{Proof of Theorem \ref{main:thm:1}}
In this section, we provide the key ingredients for the proof of Theorem \ref{main:thm:1}. From the recent classification of completely unobstructed polynomial portraits (see Theorem 1.10 in  \cite{FKKPS}),  portraits (1) and (2) in Theorem \ref{main:thm:1} are completely unobstructed. We show in detail why portrait (4) is completely unobstructed. The proof for each of the remaining portraits is similar.

\noindent\textbf{Analysis of portrait (4)}
$$\xymatrix{\star\ar[r]^{d}&a\ar[r]^{d}&b\ar[r]&x_{1}\ar[r]&\cdots\ar[r]&x_{n}\ar@(ur,dr)}$$

Let $f$ be a Thurston map that realizes dynamic portrait (4) with $n\geq2$. We will show that $f$ does not admit Levy cycles. Note that $|P_{f}|=n+2$.

\begin{prop}\label{prop:proA} Let $Q\subset\{x_{1},x_{2},\dots x_{n}\}$ with $|Q|=2$. If $\gamma$ is a simple closed curve that separates the sets $Q$ and $P_{f}\setminus Q$, then $\gamma$ cannot be a member of a Levy cycle. 
\end{prop}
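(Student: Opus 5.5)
Portrait (4): the critical points are $\star$ and $a$. The critical values are $a$ and $b$, and $P_f=\{a,b,x_1,\dots,x_n\}$. Fix $Q=\{x_i,x_j\}$ and let $\gamma$ separate $Q$ from $P_f\setminus Q$. Since $|P_f|=n+2\ge 4$, the complementary side contains $a$, $b$ and $n-2$ further points. It is essential precisely when $n\ge 3$; if $n=2$, $\gamma$ is still essential because both sides contain two points.

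The plan is to argue by contradiction. Suppose $\gamma=\delta_0$ belongs to a Levy cycle $\delta_0,\dots,\delta_{m-1}$. By Lemma~\ref{lemma:lem2}, no $\delta_k$ separates $a$ from $b$. Let $D_k$ be the complementary disk of $\delta_k$ that avoids both critical values. By Lemma~\ref{lemma:lem1}, $f^{-1}(D_k)$ consists of $d$ disjoint disks, each mapped homeomorphically onto $D_k$. In particular each preimage curve maps with degree one.

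The Levy condition says that $\delta_{k-1}$ is homotopic to a preimage component of $\delta_k$, which bounds one of these disks $D'$. So one complementary disk of $\delta_{k-1}$ contains the same postcritical points as $D'$, and $f$ maps $D'\cap P_f$ injectively into $D_k\cap P_f\subset P_f\setminus\{a,b\}$. Two options remain. If the side of $\delta_{k-1}$ homotopic to $D'$ is the one avoiding $a,b$, then $f$ maps $P_f\cap D_{k-1}$ injectively into $P_f\cap D_k$. If it is the other side, it contains $a$ and $b$, and $f(b)=x_1\in D_k$ while $f(a)=b\notin D_k$. Hence $a\notin D'$ and that side cannot be homotopic to $D'$. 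Therefore $f$ sends $S_k:=P_f\cap D_k$ injectively into $S_{k+1}$ (indices mod $m$), every $S_k\subset\{x_1,\dots,x_n\}$, and $|S_k|\ge 2$. Going around the cycle, all the $|S_k|$ are equal and each map $f:S_k\to S_{k+1}$ is a bijection.

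The contradiction then comes from the dynamics on $\{x_1,\dots,x_n\}$: $f(x_i)=x_{i+1}$ for $i<n$, and $f(x_n)=x_n$. Composing around the cycle, $f^{m}$ maps $S_0=Q$ bijectively onto itself. But $f^{m}$ is injective on a set only if that set contains at most one point of the final segment. More precisely, $f^{m}(x_i)=x_{\min(i+m,n)}$, so $f^{m}(Q)=Q$ would force every element of $Q$ to be a fixed point of $f^m$. The only such point among the $x_i$ is $x_n$, and $|Q|=2$ gives the contradiction. The same argument shows more generally that no set of at least two non-critical-value postcritical points can cycle, so the proposition holds for any $Q$ with $|Q|=2$.

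The main obstacle is justifying the homotopy bookkeeping. Specifically, one must show that "homotopic to a preimage component bounding $D'$" really identifies the postcritical content of a complementary disk of $\delta_{k-1}$ with $D'\cap P_f$, and rule out the side containing $a,b$. I would handle this by noting that homotopic essential curves in $S^2\setminus P_f$ induce the same partition of $P_f$. Beyond that, the only care needed is the small case $n=2$, where both sides have two points.
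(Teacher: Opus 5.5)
Your argument is correct, and it takes a genuinely different route from the paper's.

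\textbf{The paper's route.} It works backward from the specific curve $\gamma$. It pulls back the core arc $x_ix_j$ and observes that the only possible essential preimage surrounds $\{x_{i-1},x_{j-1}\}$ (or a variant with $x_{n-1}$ or $x_n$ when $j=n$). It then inducts on $i$ until the preimage surrounds $b$ and so separates the critical values, where Lemma \ref{lemma:lem2} applies. The case $|Q|\ge 3$ requires a separate double induction (Proposition \ref{prop:proB}).

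\textbf{Your route.} You work forward around the whole hypothetical cycle. You use Lemma \ref{lemma:lem2} to pick the side $D_k$ of each $\delta_k$ free of critical values, the trivial covering over $D_k$, and the fact that homotopic essential curves induce the same partition of $P_f$. The key observation is $f(a)=b\notin D_k$: it forces the lifted disk's side of $\delta_{k-1}$ to be $D_{k-1}$. This gives a cycle of injections $S_0\to S_1\to\cdots\to S_0$, so $f^m$ permutes $S_0$. Hence every point of $S_0$ is $f$-periodic. Since $x_n$ is the only periodic point among the $x_i$ and $|S_0|\ge 2$, this is impossible.

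\textbf{Comparison.} Your route gains generality and economy. The hypothesis $|Q|=2$ is used only through $|S_0|\ge 2$, so the same argument proves Proposition \ref{prop:proB} and Corollary \ref{cor:corC} at once. It also avoids the paper's delicate bookkeeping of ``the only essential preimage'', including the unaddressed possibility that no essential preimage exists. The paper's route gains an explicit, curve-by-curve picture of why each candidate fails, which it uses as a template for the other portraits. Your forward argument also adapts, but two steps depend on the portrait. Excluding the side containing both critical values rests, for (4), on $f(a)=b$. For a portrait like (3), where every $x_i$ is periodic, you would need the sharper conclusion that the entire forward orbit of $S_0$ stays in $\bigcup_k S_k$ and so avoids the critical values.

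\textbf{Two small slips.} The assertion $f(b)=x_1\in D_k$ is unjustified and unnecessary; only $f(a)=b\notin D_k$ is used. Your sentence about essentiality contradicts itself, though $\gamma$ is in fact essential for every $n\ge 2$, since both sides contain at least two points of $P_f$.
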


\begin{proof} We may assume that $Q=\{x_{i}, x_{j}\}$ with $1\leq i<j\leq n$. Let $\gamma$ be a simple closed curve that separates the sets $Q$ and $P_{f}\setminus Q$. We proceed by induction on $i$ to show that $\gamma$ cannot be member of a Levy cycle.

\emph{Base case:} $i=1$ and $1<j\leq n$. Denote by $x_{1}x_{j}$ the core arc with endpoints $x_{1}$ and $x_{j}$ surrounded by $\gamma$.
\begin{itemize} 
\item[$\bullet$] If $j<n$, then the only essential preimage of $x_{1}x_{j}$ under $f$ is an arc of the form $bx_{j-1}$. Hence the only essential preimage of $\gamma$ separates $a$ and $b$. By Lemma \ref{lemma:lem2}, such a preimage cannot be homotopic to a member of a Levy cycle. Thus, $\gamma$ cannot be a member of a Levy cycle.
\begin{center}
\begin{tikzpicture}
       \coordinate (P) at (9, 0);
       \coordinate (P1) at (9, -0.5);
       \coordinate (P2) at (9, 0.5);
       \draw[thick, color=black!80] (P) ellipse (0.8cm and 1.2cm); 
       \draw[red, line width=1.5pt] (P1) -- (P2);  
       \node at (7.5, 0) { $a$}; 
       \node at (9, 0.7) { $b$}; 
       \node at (9,-0.7) { $x_{j-1}$}; 
        \node at (9.9, 0.8) { $\gamma'$};
       
       \coordinate (Q) at (13, 0);
       \coordinate (Q1) at (13, -0.5);
       \coordinate (Q2) at (13, 0.5);
       \node at (13, 0.7) { $x_{1}$}; 
       \node at (13, -0.7) { $x_{j}$};
       \node at (13.8, 0.8) { $\gamma$};  
       \draw[thick, color=black!80] (Q) ellipse (0.8cm and 1.2cm);   
    \draw[thick, -{Stealth[scale=1.2]}] (10.5, 0) -- (11.5, 0) 
        node[midway, above, font=\small\itshape] {f};
          \draw[red, line width=1.5pt] (Q1) -- (Q2);
\end{tikzpicture}
\end{center}
\item[$\bullet$] If $j=n$, then the only essential preimage of $x_{1}x_{j}$ under $f$ is an arc of the form $bw$, where either $w=x_{n}$ or $w=x_{n-1}$. In either case, the only essential preimage of $\gamma$ under $f$ separates $a$ and $b$. As in the preceding paragraph, $\gamma$ cannot be a member of a Levy cycle.
\end{itemize}

\emph{Inductive Hypothesis:} Suppose that the statement is valid for any simple closed curve separating the sets $\{x_{i},x_{j}\}$ and $P_{f}\setminus\{x_{i},x_{j}\}$, with $1\leq i<j\leq n$.

Now let $\gamma$ be a simple closed curve separating $u=x_{i+1}$ and $v=x_{j}$ from the other postcritical points, with $i+1<j\leq n$. Let $uv$ be the core arc surrounded by $\gamma$. 
\begin{itemize}
\item[$\bullet$] 
If $j<n$, then the only essential preimage of $uv$ under $f$ is an arc of the form $x_{i}x_{j-1}$. Hence the only essential preimage of $\gamma$ is an essential curve $\gamma'$   separating $x_{i}$ and $x_{j-1}$ from the other postcritical points, with $i<j-1$. By the inductive hypothesis, $\gamma'$ cannot be a member of a Levy cycle. Thus $\gamma$ cannot be a member of a Levy cycle either.

\item[$\bullet$] If $j=n$, then the only essential preimage of $uv$ under $f$ is an arc of the form $x_{i}w$, where $w$ is either $x_{n}$ or $x_{n-1}$. So the only essential preimage of $\gamma$ is an essential curve $\gamma'$  that separates $x_{i}$ and $x_{n-1}$, or $x_{i}$ and $x_{n-1}$, from the other postcritical points. By the inductive hypothesis, $\gamma'$ cannot be a member of a Levy cycle. Thus $\gamma$ cannot be a member of a Levy cycle either.
\end{itemize}
\end{proof}

\begin{prop}\label{prop:proB} Let $Q\subset\{x_{1},x_{2},\dots,x_{n}\}$ with $|Q|=k\geq 2$. If $\gamma$ is a simple closed curve that separates the sets $Q$ and $P_{f}\setminus Q$, then $\gamma$ cannot be a member of a Levy cycle.
\end{prop}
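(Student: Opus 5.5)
Proposition \ref{prop:proA} is the case $k=2$. For general $k$ I would generalize its induction, running it on a suitable potential function of $Q$ instead of on the index $i$. The key structural fact is this: the critical values of $f$ are $a$ and $b$, and $\gamma$ surrounds only points of $\{x_1,\dots,x_n\}$. So the disk $D$ bounded by $\gamma$ that contains $Q$ contains no critical value. By Lemma \ref{lemma:lem1}, $f^{-1}(D)$ then consists of $d$ disks, each mapped homeomorphically onto $D$. Each preimage disk therefore contains exactly one preimage of each point of $Q$.

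The preimages of the $x_m$ are as follows. The preimages of $x_1$ are $b$ and non-postcritical points. For $2\le m\le n-1$, the preimages of $x_m$ are $x_{m-1}$ and non-postcritical points. The preimages of $x_n$ are $x_{n-1}$, $x_n$ and non-postcritical points. Hence a preimage disk $D'$ meets $P_f$ in a set $Q'$ of size at most $k$, obtained by choosing at most one postcritical preimage of each element of $Q$. The boundary $\partial D'$ is essential only if $|Q'|\ge 2$, since the complement always contains $a$ and $\star$-free points such as $a$ and others. If $x_n\in Q$, the two postcritical preimages $x_{n-1}$ and $x_n$ of $x_n$ lie in different disks, which is why up to two disks can be essential.

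Now let $\gamma$ belong to a Levy cycle. Then some component $\gamma'=\partial D'$ of $f^{-1}(\gamma)$ is essential and homotopic to the previous member of the cycle, and $\gamma'$ separates $Q'$ from $P_f\setminus Q'$ with $2\le |Q'|\le k$. There are two cases:
\begin{itemize}
\item[(a)] If $b\in Q'$, then $\gamma'$ separates $b$ from $a$ (since $a\notin Q'$). Lemma \ref{lemma:lem2} then forbids $\gamma'$ from lying in a Levy cycle.
\item[(b)] Otherwise $Q'\subset\{x_1,\dots,x_n\}$, and $Q'$ is obtained from $Q$ by shifting indices down by one, except possibly at $x_n$.
\end{itemize}
To close the induction, I would induct on $k$ and, for fixed $k$, on the quantity $s(Q)=\min\{m: x_m\in Q\}$. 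Tracing the cycle backwards, i.e.\ applying $f^{-1}$ repeatedly, the minimal index drops by one at each step while the minimal element is not $x_n$. Since $|Q|\ge2$, the minimal element is never $x_n$. After finitely many steps the set contains $x_1$, and the next preimage contains $b$, which is case (a), or the size drops below $k$. In every branch we land in a situation already excluded: Lemma \ref{lemma:lem2}, the induction on $k$, or Proposition \ref{prop:proA} when the size reaches $2$. Since a Levy cycle is periodic under this backward tracing, some member of the cycle would be such an excluded curve, a contradiction.

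The main obstacle is the bookkeeping when $x_n\in Q$ or the size shrinks. In that situation the essential preimage is not unique, and I must check that every essential preimage still falls into one of the excluded categories. Taking $Q'$ as any subset of the postcritical preimages with $|Q'|\ge2$, and noting that its minimal index is strictly smaller than that of $Q$ or that it contains $b$, handles this uniformly. One must also confirm that $\gamma'$ is essential on the complementary side, which holds because $a$ and $\star$-preimages lie outside.
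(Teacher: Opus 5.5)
Your proposal is correct and follows the paper's route: an outer induction on $k=|Q|$ and an inner induction on the minimal index of $Q$. You use that the disk bounded by $\gamma$ lifts to $d$ homeomorphic copies, and that any essential preimage containing $b$ is excluded by Lemma \ref{lemma:lem2}, which is exactly the paper's base case $i_1=1$. One small slip: your closing claim that the minimal index of $Q'$ always drops unless $b\in Q'$ fails when $|Q'|<k$ (e.g.\ $Q=\{x_2,x_3,x_4\}$, $Q'=\{x_2,x_3\}$), but your induction on $k$ already covers that branch, so what actually decreases is $(|Q'|,s(Q'))$ in lexicographic order.
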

\begin{proof} We proceed by induction on $k=|Q|$.

\emph{Base case:} $k=2$. This is Proposition \ref{prop:proA}.

\emph{Inductive Hypothesis $(\star)$:} Suppose that the original statement is valid for any subset $Q$ of $\{x_{1},x_{2},\dots x_{n}\}$ with $|Q|=k-1\geq1$.

Now let $Q\subset\{x_{1},x_{2},\dots,x_{n}\}$ with $|Q|=k$. We may assume that $Q=\{x_{i_1},x_{i_{2}},\cdots, x_{i_{k}}\}$ with $i_{1}<i_{2}<\cdots<i_{k}\leq n$. Let $\gamma$ be a simple closed curve separating the sets $Q$ and $P_{f}\setminus Q$. Denote by $D_{\gamma}$ the connected component of $S^{2}\setminus\gamma$ that contains $Q$. We proceed by induction on $i_{1}$ to show that $\gamma$ cannot be member of a Levy cycle.

\begin{itemize}
\item[$\bullet$] \emph{Base case:} $i_{1}=1$. Due to the action $f$ on $P_{f}\setminus\{a,b\}$, every connected component of $f^{-1}(D_{\gamma})$ has at most $k$ postcritical points. We now analyze two subcases.

\begin{itemize}
\item Every essential connected component of $f^{-1}(D_{\gamma})$ has less than $k$ postcritical points. The connected component that contains $b$ separates $a$ and $b$, so the boundary of this connected component cannot be a member of a Levy cycle. On the other hand, the inductive hypothesis $(\star)$ implies the boundary of the essential components of $f^{-1}(D_\gamma)$ -if any- cannot be member of a Levy cycle. So $\gamma$ cannot be a member of a Levy cycle.
\item At least one connected component of $f^{-1}(D_{\gamma})$ contains $k$ postcritical points. In this subcase, exactly one connected component of $f^{-1}(D_{\gamma})$ is essential and this connected component contains the set $Q'=\{b,x_{i_{2}-1},x_{i_{3}-1},\cdots\}$. The boundary of this component, say $\gamma'$, is the only essential preimage of $\gamma$ and it separates the points $a$ and $b$. Hence $\gamma'$ cannot be homotopic to a member of a Levy cycle. Thus $\gamma$ cannot be a member of a Levy cycle either.
\end{itemize}

\item[$\bullet$] \emph{Inductive Hypothesis $(\star\star)$:} Suppose the statement is valid for some $i_{1}$. 
\item[$\bullet$] \emph{Inductive Step:} Suppose that $\gamma$ separates $Q=\{x_{i_{1}+1},x_{s_{2}},\cdots, x_{s_{k}}\}$ and $P_{f}\setminus Q$ with $i_{1}+1<s_{2}<\cdots<s_{k}\leq n$. We now show that $\gamma$ cannot be member of a Levy cycle. There are two possible scenarios.
\begin{itemize}

\item If every essential connected component of $f^{-1}(D_{\gamma})$ has less than $k$ postcritical points, then the inductive hypothesis $(\star)$ implies that none of the essential components of $f^{-1}(\gamma)$ is a member of a Levy cycle. So $\gamma$ cannot be a member of a Levy cycle. 
\item If at least one connected component of $f^{-1}(D_{\gamma})$ contains $k$ postcritical points, then exactly one connected component of $f^{-1}(D_{\gamma})$, say $D'$, is essential. If $s_{k}<n$, then $D'$ contains the set $Q'=\{x_{i_{1}},x_{s_{2}-1},\cdots, x_{s_{k}-1}\}$. If $s_{k}=n$, $D'$ contains either the set $\{x_{i_{1}},x_{s_{2}-1},\cdots, x_{n-1}\}$ or the set $\{x_{i_{1}},x_{s_{2}-1},\cdots, x_{n}\}$. The boundary of $D'$, say $\gamma'$ is the only essential preimage of $\gamma$. By the inductive hypothesis $(\star\star)$, $\gamma'$ cannot be a member of a Levy cycle. So $\gamma$ cannot be a member of a Levy cycle either. This completes the proof of the proposition.
\end{itemize}
\end{itemize}
\end{proof}

\begin{cor}\label{cor:corC} Let $f$ be a bicritical Thurston map that realizes portrait (4) in Theorem \ref{main:thm:1}, then $f$ does not have Levy cycles.

\end{cor}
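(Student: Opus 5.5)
The plan is to argue by contradiction. Suppose $f$ realizes portrait (4) and has a Levy cycle $\Delta=\{\delta_0,\dots,\delta_{m-1}\}$. Fix any member $\delta=\delta_i$. Since $\delta$ is essential in $S^2\setminus P_f$, each complementary disk of $\delta$ contains at least two of the $n+2$ postcritical points $a,b,x_1,\dots,x_n$. The goal is to show that every such essential curve falls under either Lemma \ref{lemma:lem2} or Proposition \ref{prop:proB}. Either way $\delta$ cannot belong to a Levy cycle, which gives the contradiction.

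The classification of $\delta$ proceeds by where the two critical values go. In portrait (4) the critical points are $\star$ and $a$, so the critical values are $a$ and $b$.
\begin{itemize}
\item If $\delta$ separates $a$ from $b$, Lemma \ref{lemma:lem2} applies directly.
\item Otherwise $a$ and $b$ lie in the same complementary component of $\delta$. Let $Q$ be the set of postcritical points in the other component. Then $Q\subset\{x_1,\dots,x_n\}$, and $|Q|\ge 2$ by essentiality.
\end{itemize}
In the second case $\delta$ separates $Q$ from $P_f\setminus Q$, so Proposition \ref{prop:proB} shows that $\delta$ is not a member of a Levy cycle. Since every member of $\Delta$ lands in one of these two cases, $\Delta$ cannot exist.

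One point to check is the degenerate case $n=1$. There $|P_f|=3<4$, which is excluded by the standing assumption $|P_f|\ge 4$, so $n\ge 2$ as was assumed in the propositions. A second point is that the propositions are stated for a curve $\gamma$ itself rather than for its homotopy class. I would note that membership in a Levy cycle depends only on the homotopy class in $S^2\setminus P_f$, and the separation property is also homotopy invariant. Hence the dichotomy above is well defined on isotopy classes.

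The main obstacle has effectively been absorbed into Proposition \ref{prop:proB}, whose inductive bookkeeping handles all curves surrounding only points of the tail $x_1,\dots,x_n$. What remains is to make sure no configuration escapes this dichotomy. I would double-check the case where the complementary component containing $a,b$ contains no $x_j$ at all, that is, where $Q=\{x_1,\dots,x_n\}$. This case is still covered by Proposition \ref{prop:proB}, since that proposition allows $|Q|=k$ up to $n$. I would also verify that Lei's Theorem then yields combinatorial equivalence to a rational map, which is the content needed for case (4) of Theorem \ref{main:thm:1}.
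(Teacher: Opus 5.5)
Your proposal is correct and matches the paper's proof. Curves separating the critical values $a$ and $b$ are ruled out by Lemma~\ref{lemma:lem2}, and every other essential curve separates some $Q\subseteq\{x_1,\dots,x_n\}$ with $|Q|\ge 2$ from $P_f\setminus Q$, which Proposition~\ref{prop:proB} rules out; your additional checks ($n\ge 2$, homotopy invariance, and the case $Q=\{x_1,\dots,x_n\}$) are sound details the paper leaves implicit.
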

\begin{proof} We proceed by contradiction. Suppose that there is a bicritical Thurston map $f$ realizing portrait (4) that admits a Levy cycle. By Lemma \ref{lemma:lem2}, the members of the Levy cycle must be essential curves that cannot separate the two critical values of $f$, say $a$ and $b$. So each member of the Levy cycle must separate a subset $Q$ of $\{x_{1},\cdots,x_{n}\}$ and $P_{f}\setminus Q$. This contradicts Proposition \ref{prop:proB}.
\end{proof}

\section{Portraits Realized by An Obstructed Map}\label{sect:por}
In this section, we prove Theorem \ref{main:thm:2}. 
 
To prove Theorem  \ref{main:thm:2}, we need a topological
description for bicritical maps. The following definitions are taken from \cite{FKKPS}. 

We define a \emph{rose} to be the boundary of the union of finitely many
closed topological disks in the 2-sphere which are disjoint except for
having exactly one point in common.  We view a rose as a graph with exactly one vertex. Its edges are called \emph{petals}. We define a rose map to be a map of pairs $g\co (S_1^2,R_1)\to
(S_2^2,R_2)$, where $S_1^2$ and $S_2^2$ are two oriented copies of
$S^2$, $g\co S_1^2\to S_2^2$ is an orientation-preserving finite
branched covering map, $R_2\subseteq S_2^2$ is a rose, $R_1=
g^{-1}(R_2)$ is a graph with pullback graph structure and every
connected component of $S_2^2\setminus R_2$ contains at most one
critical value of $g$. 

In the next lemma, $\Gamma$ is a bicritical portrait of degree $d\geq 2$, $P=P_{\Gamma}$, $\tau:V(\Gamma)\to V(\Gamma)$ is as in Section \ref{sect:preliminaries}, and $V_{\tau}$ is the set of critical values of $\tau$. Under the assumptions of Lemma \ref{lem:reductionlem} one can precompose a rose map $g\co S_1^2\to S_2^2$ that realizes the branch data of $\Gamma$ with a suitable homeomorphism $h\co S_2^2\to S_1^2$ to obtain a Thurston map $f:=g\circ h$ that realizes $\Gamma$ and has a Levy cycle.

\begin{lemma}\label{lem:reductionlem} Let $A\subset P$ such that $V_{\tau}\subseteq A$ and $|P\setminus A|\geq2$. Let $B=P\setminus A$. Suppose that there exists a partition $\{D_{1},\dots,D_{s}\}$ of the set $\tau^{-1}(B)$ with $s\leq d$ such that the restrictions $\tau|_{D_{i}}:D_{i}\to B$ are injections for $i=1,\dots,s$, and that $A\subseteq D_{1}$.
\begin{itemize}
\item[(1)] If $D_{1}=A$, then $\Gamma$ is realized by a Thurston map that has a Levy curve.
\item[(2)] If $D_{1}=A\sqcup\{c\}$ with $\tau(c)=c$, then $\Gamma$ is realized by a Thurston map that has a Levy cycle of length 2.
\end{itemize}
\end{lemma}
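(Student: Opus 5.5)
We plan to build the realizing map explicitly as $f=g\circ h$, where $g\co(S_1^2,R_1)\to(S_2^2,R_2)$ is a rose map realizing the branch data of $\Gamma$ and $h\co S_2^2\to S_1^2$ is a homeomorphism that we choose so the dynamics on $P$ is $\tau$.

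\textbf{Construction of $g$.} Start from the model $z\mapsto z^d$ (Remark \ref{remark:rem1}), with its two critical values in $A$. In $S_2^2$, take a closed disk $U$ containing $A$, and hence both critical values. Its complement $W=S_2^2\setminus U$ is a disk containing $B$ and no critical value. Build the rose $R_2$ with one vertex, choosing petals so that:
\begin{itemize}
\item $W$ is a face of $R_2$ (or a union of faces, each with no critical value);
\item the remaining faces each contain at most one critical value.
\end{itemize}
By Lemma \ref{lemma:lem1}, $g^{-1}(W)$ then consists of exactly $d$ disjoint disks $W_1,\dots,W_d$, each mapped homeomorphically onto $W$. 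The complement $g^{-1}(U)$ is connected, contains both critical points, and is a disk when $U$ contains both critical values; here $\partial U$ separates them, so its preimage is a single curve of degree $d$.

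\textbf{Choice of $h$.} Put marked points $P$ into $S_1^2$ as follows. For $i\le s$, place the points of $D_i$ lying over $B$ in $W_i$, with each $x\in D_i$ placed at the unique $g$-preimage of $\tau(x)$ in $W_i$. This is consistent because $\tau|_{D_i}$ is injective and $s\le d$. Points whose image lies in $A$ go into $g^{-1}(U)$ at appropriate preimages; critical points go to the two critical points of $g$. Then choose the homeomorphism $h$ so that:
\begin{itemize}
\item $h$ sends each point of $P$ in $S_2^2$ to its chosen location;
\item $h$ maps the curve $\gamma=\partial W$, which encloses $B$ (essential since $|B|\ge 2$; on the other side lies $A\supseteq V_\tau$ with $|A|\ge 2$), onto $\partial W_1$.
\end{itemize}
This is possible because $D_1\cap\tau^{-1}(B)$ is exactly the set of points placed in $W_1$. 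In case (1), $D_1=A$ means that the points mapping to $B$ which sit in $W_1$ are precisely $h(A)$. So $h(W)$ can be taken to be the disk whose boundary is $\partial W_1$ and which contains the $h$-images of $B$, while $h(U)$ contains everything else. Then $f^{-1}(\gamma)\supseteq h^{-1}(\partial W_1)$, and that component maps with degree one onto $\gamma$. Homotopy-wise $h^{-1}(\partial W_1)$ separates $P$ into the same two sets as $\gamma$, since the points in $W_1$ are $h(A)$; this is the complementary side, so it is still the same curve. Hence $\gamma$ is a Levy curve.

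\textbf{Case (2).} Here $W_1$ contains $h(A)\cup\{h(c)\}$ with $c$ fixed, so $c\in B$. The preimage of $\gamma$ through $W_1$ separates $A\cup\{c\}$ from $B\setminus\{c\}$, giving a second curve $\gamma'$. We will arrange $h$, by placing $c$ suitably in the $W_j$, so that $\gamma'$ has a degree-one preimage homotopic to $\gamma$. Together these give a Levy cycle of length 2.

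\textbf{Main obstacle.} The hardest step is the bookkeeping showing that $h$ can realize all constraints simultaneously, including orientation and isotopy class. This requires ordering the points inside each disk $W_i$ compatibly with the cyclic ordering of the petals. Case (2) is the most delicate, since $c$ must be accounted for in both curves of the cycle.
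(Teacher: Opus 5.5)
\textbf{Part (1).} Your argument is the paper's construction. The homeomorphism $h$ sends the $A$-side $U$ of $\gamma$ onto $\overline{W_1}$ with $h(a)=g_1^{-1}(\tau(a))$. The partition $\{D_i\}$ lets the other points of $\tau^{-1}(B)$ go to distinct preimages in $W_2,\dots,W_s$. Then $h^{-1}(\partial W_1)=\gamma$ is a degree-one preimage of itself. Two corrections are needed there.
\begin{itemize}
\item The reason $h^{-1}(\partial W_1)$ is $\gamma$ is only that you imposed $h(\gamma)=\partial W_1$. Inducing the same partition $A\mid B$ of $P$ would not suffice: Dehn twists about curves meeting $\gamma$ preserve the partition but change the homotopy class.
\item Your remark on $g^{-1}(U)$ is wrong. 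Both critical values lie in $U$, so $g^{-1}(\partial U)=\bigcup_i\partial W_i$ consists of $d$ curves, each of degree one, and $g^{-1}(U)$ is a sphere with $d$ holes, not a disk. The remark is never used, so delete it.
\end{itemize}

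\textbf{Part (2) has a genuine gap.} You only announce that $h$ will be arranged, and your hints point at the wrong freedom.
\begin{itemize}
\item Your standing requirement $h(\gamma)=\partial W_1$ is impossible here. Both $h(A)$ and $h(c)$ lie in $W_1$, while $A$ and $c$ lie on opposite sides of $\gamma$.
\item Defining $\gamma'=h^{-1}(\partial W_1)$ and then asking $h$ to make a lift of $\gamma'$ homotopic to $\gamma$ is circular, because $\gamma'$ depends on $h$.
\item There is no freedom in placing $c$: $c\in D_1$ forces $h(c)=g_1^{-1}(c)\in W_1$.
\end{itemize}
What must be controlled is the isotopy class of $h$, and by the remark above this cannot be read off from the partitions the curves induce.

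\textbf{The missing idea} is to fix both curves before choosing $h$, and then reverse the nesting.
\begin{itemize}
\item Take $\alpha\subset W$ separating $A\sqcup\{c\}$ from $B\setminus\{c\}$. Let $D_\alpha\subset W$ be the disk it bounds containing $B\setminus\{c\}$.
\item Let $E_1,\dots,E_d$ be the components of $g^{-1}(D_\alpha)$, labeled so that $\overline{E_i}\subset W_i$.
\item Choose $h$ mapping the three complementary regions of $\gamma\cup\alpha$ as follows: the disk containing $A$ onto $E_1$, the annulus containing $c$ onto $W_1\setminus\overline{E_1}$, and $D_\alpha$ onto $S_1^2\setminus\overline{W_1}$. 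In particular $h(\gamma)=\partial E_1$ and $h(\alpha)=\partial W_1$.
\end{itemize}

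The point constraints are compatible with this choice.
\begin{itemize}
\item Injectivity of $\tau|_{D_1}$ together with $\tau(c)=c$ gives $\tau(A)\subseteq B\setminus\{c\}$. Hence $g_1^{-1}(\tau(a))\in E_1$ for every $a\in A$.
\item $g_1^{-1}(c)\in W_1\setminus\overline{E_1}$.
\item Every other prescribed point goes into some $W_j$ with $j\ge 2$, or into the interior of $g^{-1}(U)$. Both lie in $S_1^2\setminus\overline{W_1}$.
\end{itemize}

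With this $h$, $\alpha$ is a degree-one component of $f^{-1}(\gamma)$, and $\gamma$ is a degree-one component of $f^{-1}(\alpha)$. You must also check that $\alpha$ is essential, since the hypothesis only gives $|B|\ge 2$. This holds because $|B|\ge|D_1|=|A|+1\ge 3$, so $|B\setminus\{c\}|\ge 2$.
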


\begin{proof}  
Let $V_{\tau}=\{ v_{1},v_{2}\}$. Enumerate the vertices of $\Gamma$ as $v_{1},v_{2}, \cdots, v_{n}$ with $A=\{v_{1},\dots, v_{r}\}$. Since the branch data of $\Gamma$ is realized by the polynomial $z\mapsto z^{d}$, there exists a rose map $g:(S^2_{1},R_{1})\to(S^{2}_{2},R_{2})$ realizing the branch data of $\Gamma$ such that $V_{g}=\{v_1,v_2\}$,  the $n-1$ connected components of $S_{2}^{2}\setminus R_{2}$, each bounded by a petal of $R_{2}$, are labeled $v_{1},v_{3},\dots,v_{n}$ in counterclockwise order, and the remaining connected component is labeled as $v_{2}$. 
Let $\gamma$ be a simple closed curve in $S_{2}^{2}$ separating the sets $A$ and $B$. Let $D_{\gamma}$ be the open disk that contains the set $B$. Since $\overline{D_{\gamma}}\cap V_{g}=\emptyset$, $g^{-1}(D_{\gamma})$ is the disjoint union of $d$ disks, say $W_{1}$, $\dots$, $W_{d}$ and each restriction $g:{\overline{W_{i}}}\to\overline{D_{\gamma}}$ is a homeomorphism. Denote the restriction of $g$ to $W_{1}$ as $g_{1}$ and the boundary of $W_{1}$ as $\tilde{\gamma}$.

We now prove statement 1. Since $A=D_{1}$ and $\tau|_{D_{1}}:D_{1}\to B$ is an injection, $\tau(a)\in B$ for all $a\in A$ and the restriction $g_{1}^{-1}\circ\tau|_{A}: A\to W_{1}\cap g^{-1}(B)$ is an injection. Construct an orientation preserving homeomorphism $h: S^{2}_{2}\to S^2_{1}$ such that (i) $h:S^{2}_{2}\setminus D_{\gamma}\to\overline{W_{1}}$, (ii) $h(a)=g_{1}^{-1}(\tau(a))$ for all $a\in A$, and (iii) the dynamic portrait of $f:=g\circ h$ is isomorphic to $\Gamma$. The curve $\gamma$ is by itself a Levy cycle for $f$. This proves statement 1.

To prove statement 2, let $\alpha$ be a simple closed curve contained in $D_{\gamma}$ such that $\alpha$ separates the sets $A\sqcup\{c\}$ and $B\setminus\{c\}$ in $S^{2}_{2}$. Let $D_\alpha$ be the open disk that contains the set $B\setminus\{c\}$. Since $\overline{D_{\alpha}}\cap V_{g}=\emptyset$, $g^{-1}(D_{\alpha})$ is the disjoint union of $d$ disks, say $U_{1}$, $\dots$, $U_{d}$, and each restriction $g:{\overline{U_{i}}}\to\overline{D_{\alpha}}$ is a homeomorphism. We may and do assume that $\overline{U_{i}}\subset W_{i}$ for all $i$. 
Let $\tilde{\alpha}$ be the boundary of $U_{1}$. Now construct an orientation preserving homeomorphism $h: S^{2}_{2}\to S^2_{1}$ such that 
\begin{itemize}
\item[$\bullet$] $h(D_{\alpha})=S_{1}^{2}\setminus\overline{W_{1}}$, $h(a)=g^{-1}_{1}(\tau(a))$ for all $a\in A$,
\item[$\bullet$] $h(\overline{D}_{\gamma}\setminus D_{\alpha})=\overline{W_{1}}\setminus U_{1}$, $h(c)=g_{1}^{-1}(c)$, $h(\alpha)=\tilde{\gamma}$ $h(\gamma)=\tilde{\alpha}$,
\item[$\bullet$] $h(S^{2}_{2}\setminus\overline{D_{\gamma}})=U_{1}$, and $h(x)\in g^{-1}(\tau(x))$ for all $x\in B\setminus\{c\}$.
\end{itemize}
The dynamic portrait of $f:=g\circ h$ is isomorphic to $\Gamma$ and $\Delta=\{\gamma,\alpha\}$ is a Levy cycle for $f$. This proves statement 2. 
\end{proof}  

\begin{proof}[Proof of Theorem \ref{main:thm:2}]. We first prove the theorem in case (i). By Remark \ref{remark:rem1}, $\Gamma$ is realizable by a bicritical Thurston map, say $g$, but this Thurston map may not be obstructed. We may and do assume that $\Gamma=\Gamma_{g}$. Let $x_{1},x_{2},\dots,x_{n}$ be the points in the nonattracting cycle of length at least 2 of the dynamic portrait of $g$; i.e.,
\[\xymatrix{x_{1}\ar[r]&x_{2}\ar[r]&\cdots\ar[r]&x_{n}\ar@/^1pc/[lll]}\]

Because $g$ is bicritical, no point in $\{x_{1},\dots, x_{n}\}$ is a critical value. Fix a simple closed curve $\gamma$ that separates the sets $\{x_{1},\dots, x_{n}\}$ and $P_{g}\setminus\{x_{1},\dots, x_{n}\}$. Let $D_{\gamma}$ be the connected component of $S^2\setminus\gamma$ that contains the set $\{x_{1},\dots, x_{n}\}$. Then $g^{-1}(D_{\gamma})$ is the disjoint union of $d$ topological disks. Let $\widetilde{D}$ be the connected component of  $g^{-1}(D_{\gamma})$ that contains the point $x_{n}$. Let $\widetilde{\gamma}$ be boundary of $\widetilde{D}$. Since $g$ is open and proper, $\widetilde{\gamma}$ is a connected component of $g^{-1}(\gamma)$. For each $i\in\{1,\dots,n\}$, let $y_{i}$ be the unique element of $g^{-1}(x_{i})$ in $\widetilde{D}$. Note that $y_{1}=x_{n}$. Construct an orientation preserving homeomorphism $h:S^{2}\to S^{2}$ such that $h(x_{i})=y_{i+1\mod n}$ for $1\leq i\leq n$, $h(x)=x$ for all $x\in P_{g}\setminus\{x_{1},\dots, x_{n}\}$, and $h(D_{\gamma})=D_{\widetilde{\gamma}}$. Then $f:=h\circ g$ is a Thurston map whose dynamic portrait is isomorphic to $\Gamma_{g}$. Note that $\Gamma_{f}$ contains the 
 nonattracting cycle of length at least 2 given by
\[\xymatrix{y_{1}\ar[r]&y_{2}\ar[r]&\cdots\ar[r]&y_{n}\ar@/^1pc/[lll]}\]

By construction $\widetilde{\gamma}$ is by itself a Levy cycle for $f$.
 
In this paragraph we prove the theorem in case (ii) when $d\geq4$. Let $V_{\tau}=\{a,b\}$ be the set of critical values of $\Gamma$, let $P$ be the postcritical vertices of $\Gamma$ with $|P|\geq 4$. Let $A=V_{\tau}$ and $B=P\setminus A$. Since $d\geq4$ and $|\tau^{-1}(x)\cap P|\leq3$ for all $x\in P$, there exists a partition of the set $\tau^{-1}(B)$, say $\{D_{1},\dots, D_{s}\}$ with $s\leq d$ such that for each $i$ the restriction $\tau|_{D_{i}}: D_{i}\to B$ is an injection with $D_{1}=A$. Part(1) of Lemma  \ref{lem:reductionlem} implies that $\Gamma$ is realized by an obstructed Thurston map.

We now prove the theorem in case (ii) when $d=3$. There are two subcases:

\begin{itemize}
\item If $|\tau^{-1}(x)\cap P|<2$ for all $x\in P$, proceed as in case (ii) with $d\geq4$.
\item There is exactly one postcritical vertex whose three preimages are also postrcritical vertices. If such a postcritical vertex is not a fixed point, then the portrait has a nonattracting cycle. By case (i), the portrait admits an obstructed representative.  If such a postcritical vertex is a fixed point, then the portrait has the form
$$\xymatrix@R=1pc{
\star\ar[r]^{3}&a\ar[r]&x_{1}\ar[r] &x_{2}\ar[r]&\cdots\ar[r]&x_{n}\ar@(ur,dr)\\
\star\ar[r]^{3}&b\ar[r]&y_{1}\ar[r]&y_{2}\ar[r]&\cdots\ar[r]&y_{m}\ar[u] }$$
with $n\geq m\geq1$. Consider $A=V_{\tau}=\{a,b\}$ and let $B=P\setminus A$. If $n=1$, then $|P|=4$ and there exists a partition $\{D_{1}, D_{2}, D_{3}\}$ of the set $g^{-1}(B)$ such that $D_{1}=A$, $x_{1}\in D_{2}$, $y_{1}\in D_{3}$.  
 If $n\geq2$, then $|P|\geq5$ and there exists a partition $\{D_{1}, D_{2}, D_{3}\}$ of the set $g^{-1}(B)$ such that $x_{n}\in D_{1}$, $x_{n-1}\in D_{2}$, $y_{m}\in D_{3}$ with $D_{1}=A\sqcup\{x_{n}\}$. In the former subcase apply part(1) of Lemma \ref{lem:reductionlem} and in the latter subcase apply part(2) of Lemma \ref{lem:reductionlem} with $c=x_{n}$ to conclude that the portrait is realized by an obstructed Thurston map.\end{itemize}
\end{proof}
\section{Degree 2}\label{sect:deg2}
In this section, we determine the dynamic portraits of degree 2 that can be realized by an obstructed Thurston map. We first summarize some known partial results. 
\begin{itemize}
\item Any dynamic portrait of degree 2 with two connected components is realized by a Thurston map that is the formal mating of two Thurston polynomials $P(z)=z^{2}+c_{P}$ and $Q(z)=z^2+c_{Q}$, with $c_{P}$ and $c_{Q}$ in the 1/2-limb of the Mandelbrot set (see Section \ref{sect:preliminaries}). By Theorem 4.1 of \cite{TanL}, such a formal mating has a Levy cycle. 
\item By Theorem 1.10 in \cite{FKKPS}, the only bicritical polynomial portraits of degree $d\geq2$ that are completely unobstructed are portraits (1) and (2) in Theorem \ref{main:thm:1}.
\item By Theorem\ref{main:thm:2}, if $\Gamma$ has a nonattracting cycle of length at least 2, then $\Gamma$ is realized by a Thurston map that has a Levy cycle.
 \end{itemize}

In the rest of this section we assume that $\Gamma$ is a connected dynamic portrait of degree $2$, it is not a  polynomial portrait and has no nonattracting cycles of length at least $2$. So, either $\Gamma$ is connected and has a fixed point or $\Gamma$ is connected and has an attracting cycle. In each of the following cases $P=P_{\Gamma}$, $A$ is a specific subset of the vertices of $\Gamma$, $B=P\setminus A$ and $\tau^{-1}(B)=D_{1}\sqcup D_{2}$. Applying 
Lemma \ref{lem:reductionlem} we show that if $\Gamma$ is not in the list provided in Theorem \ref{main:thm:1}, then $\Gamma$ can be realized by an obstructed Thurston map.

\noindent{\bf Case 1. $\Gamma$ is connected and has exactly one fixed point.} 

\begin{itemize}
\item[I.] Both critical points of $\Gamma$ are preperiodic.
$$\xymatrix@R=0.05pc{
\star \ar[r]^2 & x_{1} \ar[r] &\cdots \ar[r] & x_{m} \ar[dr] & & & & \\
& & & & z_1 \ar[r] & \cdots \ar[r] & z_\ell \ar@(ur,dr) [] \\
\star \ar[r]^2 & y_{1} \ar[r] &\cdots\ar[r] & y_{n} \ar[ur] & & & &
}$$
Here $\ell\geq2$ and by symmetry we may assume that $m\geq n$. If $(m,n)=(1,1)$, then $\Gamma$ is  isomorphic to portrait (6) in Theorem \ref{main:thm:1} which is completely unobstructed. So we assume that $(m,n)\neq(1,1)$.

\begin{itemize}
\item If $n\geq2$ and $\ell\neq3$, consider $A=\{x_1,y_1,z_1,z_{\ell-1}\}$ and $D_{1}=\{x_{1}, y_{1}, z_{1},z_{\ell-1}\}$. By part(1) of Lemma \ref{lem:reductionlem}, $\Gamma$ is realized by an obstructed map.

\item If $n\geq2$ and $\ell=3$, consider $A=\{x_1,y_1,z_1\}$ and $D_{1}=\{x_{1},y_{1},z_{1},z_{3}\}$. By part(2) of Lemma \ref{lem:reductionlem} with $c=z_{3}$, $\Gamma$ is realized by an obstructed map.

\item If $n=1$ and $\ell\geq2$, consider $A=\{x_{1},y_{1}\}$ and $D_{1}=\{x_{1},y_{1},z_{\ell}\}$. By part(2) of Lemma \ref{lem:reductionlem} with $c=z_{\ell}$, $\Gamma$ is realized by an obstructed map.
\end{itemize}

\item[II.] Exactly one critical point of $\Gamma$ is preperiodic.
$$\xymatrix{\star\ar[r]^{2}&x_1\ar[r]&\cdots\ar[r]&x_{k}\ar[r]^{2}&x_{k+1}\ar[r]&\cdots\ar[r]&x_{n-1}\ar[r]&x_{n}\ar@(ur,dr)}$$
If $k=1$, then $\Gamma$ is isomorphic to  portrait (4) in Theorem \ref{main:thm:1} which is completely unobstructed. So we assume that $k>1$ and $|P|=n\geq4$. Note that $k+2\leq n$.
\begin{itemize}
\item If $n\geq4$ and $k=n-2$, consider $A=\{x_{1},x_{k+1}\}$ and $D_{1}=A$. By part(1) of Lemma \ref{lem:reductionlem}, $\Gamma$ is realized by an obstructed map.

\item If $n\geq5$ and $2\leq k\leq n-3$, take $A=\{x_{1},x_{k+1}\}$ and $D_{1}=\{x_{1},x_{k+1}, x_{n}\}$. By part(2) of Lemma \ref{lem:reductionlem} with $c=x_{n}$, $\Gamma$ is realized by an obstructed map.
\end{itemize}
\end{itemize}

\noindent{\bf Case 2. $\Gamma$ is connected and has one attracting cycle.} 
\begin{itemize}
\item[I.] {The attracting cycle contains both critical points.} 
$$\xymatrix{x_{n}\ar[r]^{2}&x_{1}\ar[r]&\cdots\ar[r]\ar[r]&x_{k}\ar[r]^{2}&x_{k+1}\ar[r]&\cdots\ar[r]&x_{n-1}\ar@/^1pc/[llllll]}$$\\
If $k\in\{1,n-1\}$, then $\Gamma$ is isomorphic to portrait (3) in Theorem \ref{main:thm:1} which is completely unobstructed. So we assume that $n\geq 4$ and $1<k<n-1$. Under these settings, consider $A=\{x_1,x_{k+1}\}$ and $D_{1}=A$. Then apply part(1) of Lemma \ref{lem:reductionlem} to conclude that $\Gamma$ is realized by an obstructed map.

\item[II.]{ A critical point is preperiodic and no critical point has two preimages in $P$.}
$$\xymatrix{\star\ar[r]^{2}&x_{1}\cdots\ar[r]&x_{m}\ar[r]&\mu\ar[r]&y_{1}\cdots\ar[r]\ar[r]&y_{k}\ar[r]^{2}&y_{k+1}\cdots\ar[r]&y_{n}\ar@/^1pc/[llll]}$$\\
If $(m,n)=(1,k+1)$, then $\Gamma$ is isomorphic to portrait (5) in Theorem\ref{main:thm:1} which is completely unobstructed. So we assume that $(m,n)\neq(1,k+1)$. 
\begin{itemize}
\item If $m=1$ and $k+1<n$, consider $A=\{x_1,y_{k+1}\}$ and $D_{1}=A$. 
\item If $m\geq2$ and $k+1=n$, consider $A=\{x_1,y_{k+1}\}$ and $D_{1}=A$. 
\item If $m\geq2$ and $k+1<n$, consider $A=\{x_1,\mu,y_{k+1}\}$ and $D_{1}=A$.
\end{itemize}
In any scenario, apply part(1) of Lemma \ref{lem:reductionlem} to conclude that $\Gamma$ is realized by an obstructed map.

\item[III.]{ A critical point is preperiodic and the other critical point has two preimages in $P$.}
$$\xymatrix{\star\ar[r]^{2}&x_{1}\ar[r]&\cdots\ar[r]&x_{m}\ar[r]&\mu\ar[r]^{2}&y_{1}\ar[r]&\cdots\ar[r]&y_{n}\ar@/^1pc/[lll]}$$\\
If $(m,n)=(2,2)$, then $\Gamma$ is isomorphic to portrait (7) in Theorem \ref{main:thm:1} which is completely unobstructed. So we assume that $(m,n)\neq(2,2)$.
\begin{itemize}
\item If ($m=1$ and $n\geq2$) or $(m,n)=(2,1)$, consider $A=\{x_1,y_{1}\}$ and $D_{1}=A$. 
\item If $m\geq3$ and $n=2$, consider $A=\{x_{1},y_{1},x_{m}\}$ and $D_{1}=A$.
\item If $m\geq2$ and $n\geq3$, consider $A=\{x_{1},y_{1},y_{n}\}$ and $D_{1}=A$.
\end{itemize}
In any scenario, apply part(1) of Lemma \ref{lem:reductionlem} to conclude that $\Gamma$ is realized by an obstructed map.
\end{itemize}

\section{Completely Unobstructed Bicritical Portraits}\label{sect:cubp}
In this section, we prove that the only completely unobstructed bicritical dynamic portraits are the ones listed in Theorem \ref{main:thm:1}. More precisely, 

\begin{cor}\label{cor:corD}  Let $\Gamma$ be an abstract bicritical portrait of degree $d\geq2$ with at least four postcritical points. Then $\Gamma$ is completely unobstructed if and only if $\Gamma$ is isomorphic to one of the dynamic portraits listed in Theorem \ref{main:thm:1}.
\end{cor}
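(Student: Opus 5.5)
The "if" direction is exactly Theorem \ref{main:thm:1}: any Thurston map realizing one of portraits (1)--(7) is combinatorially equivalent to a rational map, so each of these portraits is completely unobstructed. All the work is in the "only if" direction. I will prove its contrapositive: if $\Gamma$ is not isomorphic to any portrait in the list, then $\Gamma$ is realized by a Thurston map with a Levy cycle. By Lei's Theorem that map is not equivalent to a rational map, so $\Gamma$ is not completely unobstructed.

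I will split by degree. Suppose first that $d\ge 3$.
\begin{itemize}
\item[$\bullet$] If $\Gamma$ has a nonattracting cycle of length at least $2$, Theorem \ref{main:thm:2}(i) applies.
\item[$\bullet$] Otherwise, let $a,b$ be the two critical values. If $\tau(a)\neq\tau(b)$, then $\Gamma$ contains the subgraph of Theorem \ref{main:thm:2}(ii) with $p=\tau(a)\neq q=\tau(b)$, so that theorem applies.
\item[$\bullet$] If instead some critical value is itself critical, I must check this against the chain case $\star\to a\to p\to b\to q$ of Theorem \ref{main:thm:2}(ii), or else land in one of the listed portraits.
\end{itemize}
The hardest part of the $d\ge 3$ case is the remaining configurations. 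These are: $\tau(a)=\tau(b)$; one critical value mapping onto the other critical point; the polynomial case where $\infty$ is a critical fixed point; and the case where both critical points are periodic.

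I plan to handle them as follows.
\begin{itemize}
\item[$\bullet$] If $\infty$ is a fixed critical point, I invoke Theorem 1.10 of \cite{FKKPS}: the only completely unobstructed bicritical polynomial portraits are (1) and (2).
\item[$\bullet$] If there are no nonattracting cycles of length $\ge 2$ and the portrait is not polynomial, every cycle is attracting or is a noncritical fixed point. Enumerating the shapes with two critical points of full degree $d$, I expect to reach exactly the portraits (3)--(6) or a configuration covered by Theorem \ref{main:thm:2}(ii). The case $\tau(a)=\tau(b)$ forces $p=q$ and should give portrait (6) or cases like it.
\item[$\bullet$] For any leftover shape that fits neither, I will construct partitions satisfying Lemma \ref{lem:reductionlem}, using $d\ge 3$ to have enough sheets.
\end{itemize}

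Now suppose $d=2$. Section \ref{sect:deg2} handles this case:
\begin{itemize}
\item[$\bullet$] Disconnected portraits are realized by obstructed formal matings (Theorem 4.1 of \cite{TanL}).
\item[$\bullet$] Polynomial portraits are handled by \cite{FKKPS}.
\item[$\bullet$] Portraits with a nonattracting cycle of length $\ge 2$ are handled by Theorem \ref{main:thm:2}(i).
\item[$\bullet$] The connected, nonpolynomial portraits without such cycles are exactly Cases 1 and 2, each subcase ending either in one of the listed portraits (4)--(7) or in an application of Lemma \ref{lem:reductionlem}.
\end{itemize}
I will check that Cases 1 and 2 exhaust the connected nonpolynomial possibilities. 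The subtle step in degree $2$ is portrait (7), which occurs only there, so the $d\ge3$ analysis must confirm that its analogue becomes obstructed through Theorem \ref{main:thm:2}(ii).

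The main obstacle is proving exhaustiveness for $d\ge 3$, meaning that every abstract bicritical portrait falls either in the list or under the hypotheses of Theorem \ref{main:thm:2}. I would first organize the portraits by where the two critical orbits land (same cycle, different cycles, fixed point or attracting cycle) and by whether a critical value is critical. I would then verify case by case that whenever $\tau(a)\ne\tau(b)$, or $a$ and $b$ lie on a chain, Theorem \ref{main:thm:2}(ii) applies.
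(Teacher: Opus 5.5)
\textbf{Gap in the $d\ge3$ case of the ``only if'' direction.} Your overall architecture matches the paper's: the ``if'' direction comes from Theorem~\ref{main:thm:1}, degree $2$ from Section~\ref{sect:deg2}, and for $d\ge3$ the polynomial case comes from \cite{FKKPS} together with Theorem~\ref{main:thm:2}. Your remark that the degree-$d$ analogue of portrait (7) falls under Theorem~\ref{main:thm:2}(ii) is also correct. But the $d\ge3$ ``only if'' argument has a genuine gap.

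Your second bullet is false as stated. In the subgraph of Theorem~\ref{main:thm:2}(ii), the edges $a\to p$ and $b\to q$ have weight $1$. So that theorem needs $\deg(a)=\deg(b)=1$, not merely $\tau(a)\neq\tau(b)$. In portraits (3) and (4) the critical value $a$ is itself critical and $\tau(a)=b\neq\tau(b)$. Your bullet would therefore declare two completely unobstructed portraits obstructed, contradicting Theorem~\ref{main:thm:1}.

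Your third bullet conflates two different situations. In the chain $\star\to a\to p\to^{d} b\to q$, the critical value $a$ has degree $1$ and $p$ is the critical point. When a critical value is itself critical, Theorem~\ref{main:thm:2}(ii) never applies, and this is exactly where (3) and (4) arise. Listing ``one critical value mapping onto the other critical point'' as a remaining configuration reflects the same confusion, since that is precisely the chain case already covered by Theorem~\ref{main:thm:2}(ii).

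\textbf{Missing enumeration.} The step that should land on (3)--(6) is only announced (``I expect'', ``should give portrait (6) or cases like it'', a fallback to Lemma~\ref{lem:reductionlem} for unspecified leftovers). So exhaustiveness is not shown. Note also that $\tau(a)=\tau(b)$ produces (5) as well as (6).

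What is missing is a short dichotomy for non-polynomial $\Gamma$: either $\deg(a)=\deg(b)=1$, or some critical value, say $a$, is critical.
\begin{itemize}
\item[$\bullet$] In the second case, $\tau(a)$ is a critical value different from $a$ (else $\Gamma$ is polynomial), so $\tau(a)=b$. Moreover $b$ is not critical, since otherwise $b$ would be the critical point mapping onto $a$ and $|P|=2$.
\item[$\bullet$] In the first case, $p\neq q$ is exactly Theorem~\ref{main:thm:2}(ii). If $p=q$, the orbit of $p$ does one of three things. It ends at a noncritical fixed point, giving (6). Or it meets a critical point, which is then forced to be periodic, giving (5). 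Or it ends in a nonattracting cycle of length $\ge2$, handled by Theorem~\ref{main:thm:2}(i).
\item[$\bullet$] In the second case, write $x\to^{d}a\to^{d}b\to q$. The orbit of $q$ ends at a fixed point, giving (4). Or it reaches $x$ (reaching $a$ or $b$ forces this), giving (3). Otherwise it ends in a nonattracting cycle of length $\ge2$.
\end{itemize}
No further use of Lemma~\ref{lem:reductionlem} is needed when $d\ge3$.
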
 
\begin{proof} Theorem \ref{main:thm:1} shows that portraits (1)-(7) are completely unobstructed and the discussion in Section\ref{sect:deg2} shows that these are precisely the completely unobstructed dynamic portraits in degree 2. Now, let $\Gamma$ be an abstract bicritical portrait of degree $d\geq3$ with at least four postcritical points and assume that $\Gamma$ is completely unobstructed. We will show that $\Gamma$ is isomorphic to one of the dynamic portraits (1)-(6) listed in Theorem \ref{main:thm:1}.

By the classification of abstract polynomial portraits (see Theorem 1.10 in \cite{FKKPS}), the only completely unobstructed polynomial portraits are precisely portraits (1) and (2) listed in Theorem \ref{main:thm:1}. From now on we assume that $\Gamma$ is a bicritical portrait without fixed critical points. Let $a$ and $b$ be the critical values of $\Gamma$ and denote by $\tau(a)=p$ and $\tau(b)=q$. If $\deg(a)=\deg(b)=1$ and $p\neq q$, case (ii) in Theorem \ref{main:thm:2} implies that $\Gamma$ can be realized by an obstructed Thurston map. Hence, either $\deg(a)=\deg(b)=1$ and $\tau(a)=\tau(b)=p$, or one of the critical values, say $a$, is a critical point and $\tau(a)=b$.
\begin{itemize}
\item[Case 1.] $\deg(a)=\deg(b)=1$ and $\tau(a)=\tau(b)=p$. In this case, $\Gamma$ has the form

\begin{center}
\begin{tikzcd}[column sep=2em, row sep=0.01pc]
\star \arrow[r, "d"] & a \arrow[dr] & & & & \\
& & p \arrow[r] &\cdots \\
\star \arrow[r, "d"] & b \arrow[ur] & & & &
\end{tikzcd}
\end{center}
Due to case (i) in Theorem \ref{main:thm:2}, the forward orbit of $p$ cannot contain a nonattracting cycle of length at least 2. So, either the forward orbit of $p$ contains a fixed point or  contains exactly one critical point. The former subcase implies that $\Gamma$ is isomorphic to portrait (6) and the latter subcase implies that $\Gamma$ is isomorphic to portrait (5).\vspace{.1in}
\item[Case 2.] One of the critical values, say $a$, is a critical point and $\tau(a)=b$.
$$\xymatrix{x\ar[r]^{d}&a\ar[r]^{d}&b\ar[r]&q\ar[r]&\cdots}$$
Due to case (i) in Theorem \ref{main:thm:2}, the forward orbit of $q$ cannot contain a nonattracting cycle of length at least 2. So, either the forward orbit of $q$ contains a fixed point or contains the critical point labeled by $x$. The former subcase implies that $\Gamma$ is isomorphic to portrait (4) and the latter subcase implies that $\Gamma$ is isomorphic to portrait (3). 
\end{itemize}
This completes the proof of the corollary.
\end{proof}

\noindent {\bf Acknowledgment.} The first author would like to thank Walter Parry for multiple insightful discussions and for introducing him to the problem of realizability of bicritical portraits.

\end{document}